\renewcommand{\Bbbk}{\mathbf{k}}
\theoremstyle{plain}
\newtheorem{theorem}{Theorem}[section]
\newtheorem{lemma}[theorem]{Lemma}
\newtheorem{proposition}[theorem]{Proposition}
\theoremstyle{definition}
\newtheorem{definition}[theorem]{Definition}
\theoremstyle{remark}
\newtheorem{remark}[theorem]{Remark}
\newcommand{\naturals}{{\mathbb N}}
\newcommand{\integers}{{\mathbb Z}}
\newcommand{\rationals}{{\mathbb Q}}
\newcommand{\reals}{{\mathbb R}}
\newcommand{\finfield}{{\mathbb F_p}}
\newcommand{\abs}[1]{\left|#1\right|}
\newcommand{\onto}{\twoheadrightarrow}
\DeclareMathOperator{\gr}{gr}
\DeclareMathOperator{\Hom}{Hom}
\DeclareMathOperator{\rank}{rank}
\DeclareMathOperator{\Ext}{Ext}
\begin{document}
\title{Right Angled Artin Groups and partial commutation, old and new}

\author[L. Bartholdi]{Laurent Bartholdi}
\email[L.B.]{laurent.bartholdi@math.uni-goettingen.de}
\author[H. H\"arer]{Henrika H\"arer}\thanks{Part of this work is contained in
  the bachelor thesis~\cite{Runde}
of Henrika H\"arer, n\'ee Runde.}
\email[H.H.]{henrikamaria.runde@stud.uni-goettingen.de}
\author[T. Schick]{Thomas Schick}
\email[T.S.]{thomas.schick@math.uni-goettingen.de}
\address[L.B., T.S., H.R.]{Mathematisches Institut, Universit\"at G{\"o}ttingen, Germany}
\address[L.B.]{\'Ecole Normale Sup\'erieure, Lyon, France}
\date{May 1, 2020}

\begin{abstract}
  We systematically treat algebraic objects with free partially commuting
  generators and give short and modern proofs of the various relations between
  them. These objects include right angled Artin groups, polynomial rings, Lie
  algebras, and restricted Lie algebras in partially commuting free
  generators. In particular, we compute the $p$-central and exponent-$p$ series of all right angled Artin groups, and compute the dimensions of their subquotients. We also describe their associated Lie algebras, and relate them to the cohomology ring of the group as well as to polynomial and power series rings in partially commuting variables. We finally show how the growth series of these various objects are related to each other.
\end{abstract}

\maketitle

\section{Introduction}

Right angled Artin groups (RAAGs) are a prominent geometric/combinatorial
class of groups. Originally introduced as ``partially commuting free groups'',
they interpolate in an interesting way between free groups and free abelian
groups. Of particular
interest are several additional algebraic objects which are canonically
coming along and are closely related to the structure of the RAAGs, in
particular (graded) Lie algebras and polynomial rings, both in free partially
commuting generators. The purpose of this article is to give a complete description of
many relevant properties and relations, offering modern and accessible
proofs. Many of the
results quoted below appear already in other sources, though the computation
of the exponent-$p$ and lower $p$-central series is new.

\subsection{The actors}

Let $\Gamma$ be an undirected graph, with vertex set $V$
and edge set $E$
(consisting of $2$-element subsets of $V$). The right
angled Artin group (RAAG) $A_\Gamma$ associated with $\Gamma$ is the group
defined in terms of generators and relations as
\[A_\Gamma=\langle V\mid v w=w v\text{ whenever } \{v,w\}\in E\rangle.\]

The purpose of this note is to describe classical subgroup series in
$A_\Gamma$ such as the lower-central and $p$-lower-central series, and relate
them to other algebraic objects defined in terms of $\Gamma$ as follows. 

Let $\Bbbk$ be a commutative ring. We define unital associative $\Bbbk$-algebras
\[\begin{split}R_\Gamma&=\langle V\mid v w=w v\text{ whenever }\{v,w\}\in E\rangle,\\
S_\Gamma&=\langle V\mid v^2=0, \,v w=-w v\,\forall v,w\in V,\text { and }v
w=0\text{ whenever }\{v,w\}\not\in E\rangle.\end{split}\]
Note that $R_\Gamma$ is the familiar algebra of
polynomials in partially commuting variables, and similarly $S_\Gamma$ can be
considered as an exterior algebra in partially commuting variables.

Observe that $R_\Gamma$ and $S_\Gamma$ are \emph{graded} algebras with $\deg(v)=1$ for
all $v\in V$. Therefore, they admit a natural topology, in which basic
neighbourhoods of $0$ (say in $R_\Gamma$) are spans of the set of all
monomials of degree $\ge n$. We define
\begin{equation*}
\overline{R_\Gamma} = \text{the completion
of $R_\Gamma$ in this topology}.
\end{equation*}
Just as $R_\Gamma$ is a
non-commutative polynomial algebra, $\overline{R_\Gamma}$ is an
algebra of power series in partially commuting variables.

We also define a Lie algebra over $\Bbbk$,
\[L_\Gamma=\langle V\mid [v,w]=0\text{ whenever }\{v,w\}\in E\rangle,\]
and, if $\Bbbk$ is an algebra over $\finfield$, a \emph{restricted} Lie algebra (see Section~\ref{ss:hopf} for a review of restricted Lie algebras)
\[L_{\Gamma,p}=\langle V\mid [v,w]=0\text{ whenever }\{v,w\}\in E\rangle_p.\]

Let us have a look at the extreme cases.
\begin{enumerate}
\item If $\Gamma$ is the complete graph on $d$ vertices then
  $A_\Gamma\cong\mathbb Z^d$, $R_\Gamma$ is the polynomial algebra in $d$
  variables $\Bbbk[X_1,\dots,X_d]$, $S_\Gamma$ is the Grassmann algebra
  $\bigwedge^*(\Bbbk^d)$, and $L_\Gamma\cong\Bbbk^d$ with trivial bracket. 
\item If
  $\Gamma$ is the empty graph on $d$ vertices then $A_\Gamma$ is the free
  group $F_d$, $R_\Gamma$ is the free associative algebra on $d$ generators,
  $S_\Gamma\cong\Bbbk\cdot1\oplus\Bbbk^d$ with trivial multiplication except
  $1\cdot x=x$, and $L_\Gamma$ is the free Lie algebra on $d$ generators; for
  more details see Section~\ref{ss:examples}.
\end{enumerate}

\subsection{Subgroup series}

Let $G$ be any discrete group, and let $\rho\colon G\to R^\times$ be a
representation of $G$ in an associative augmented $\Bbbk$-algebra $R$ with
augmentation ideal $\varpi$ (namely, an algebra equipped with an epimorphism
to $\Bbbk$ with kernel $\varpi$). With this representation is associated a
natural sequence of subgroups, called \emph{generalized dimension subgroups},
\[\delta_{n,\rho}\coloneqq\rho^{-1}(1+\varpi^n)=\ker(G\to (R/\varpi^n)^\times).\]
In case $R=\Bbbk G$ and $\rho$ is the regular representation, we write $\delta_{n,\Bbbk G}$ for $\delta_{n,\rho}$.

In addition, there are classical subgroup series, defined intrinsically within
$G$: 
\begin{itemize}
\item the \emph{lower central series} $(\gamma_n)$ given by $\gamma_1=G$ and $\gamma_n=[\gamma_{n-1},G]$;
\item the \emph{rational lower central series} $\gamma_{n,0}=\{g\in G\mid g^k\in\gamma_n\text{ for some }k\neq0\}$;
\item for a prime $p$ fixed throughout the discussion, the \emph{exponent-$p$ central series} $\lambda_{n,p}$ given by $\lambda_{1,p}=G$ and $\lambda_{n,p}=[\lambda_{n-1,p},G]\lambda_{n-1,p}^p$, or more directly $\lambda_{n,p}=\prod_{m+i\ge n}\gamma_m^{p^i}$;
\item again for a prime $p$ fixed throughout the discussion, the \emph{Brauer-Jennings-Lazard-Zassenhaus} series~\cites{Zassenhaus,Jennings:1941,Lazard:1954}, also called \emph{$p$-dimension} or \emph{$p$-central} series, given by $\gamma_{1,p}=G$ and $\gamma_{n,p}=[\gamma_{n-1,p},G]\gamma_{\lceil n/p\rceil,p}^p$, or more directly $\gamma_{n,p}=\prod_{m p^i\ge n}\gamma_m^{p^i}$.
\end{itemize}

All these series are \emph{central}, meaning that $\gamma_n/\gamma_{n+1}$ belongs to
the center of $G/\gamma_{n+1}$, etc. We moreover have
$[\gamma_m,\gamma_n]\subseteq\gamma_{m+n}$, etc. A classical
consequence~\cite{MagnusKarrassSolitar}*{Section 5.3} is that
$\bigoplus_{n\ge1}\gamma_n/\gamma_{n+1}$, etc., are graded Lie algebras over
$\integers$. The addition is induced by the group multiplication and the Lie
bracket is induced by the commutator.

The groups  $\gamma_{n,0}$ enjoys the extra property that
$\gamma_{n,0}/\gamma_{n+1,0}$ is torsion-free (and it is the fastest
descending central series with this property), so
$\bigoplus_{n\ge1}\gamma_{n,0}/\gamma_{n+1,0}$ is $\integers$-free. In
particular, if $\gamma_n/\gamma_{n+1}$ is torsion free for each $n$, then
$\gamma_{n,0}=\gamma_n$ for each $n$. 

We have $\lambda_{n,p}^p\subseteq\lambda_{n+1,p}$ so
$\bigoplus_{n\ge1}\lambda_{n,p}/\lambda_{n+1,p}$ is an elementary abelian
$p$-group. Similarly, $\gamma_{n,p}^p\subseteq\gamma_{n p,p}$. Furthermore, these series are fastest descending under these
requirements. It is now
classical~\cite{Zassenhaus} that
$\bigoplus_{n\ge1}\gamma_{n,p}/\gamma_{n+1,p}$ is a restricted Lie algebra
over $\finfield$. The additional, ``$p$-power'' operation as part of the \emph{restricted}
Lie algebra structure is induced by the $p$-power operation in the
group. 

Classical results identify $\delta_{n,\Bbbk G}$ with some of the above
series in case $\Bbbk$ is a field: we have
$\delta_{n,\Bbbk G}=\gamma_{n,p}$ where $p\ge0$ is the characteristic of
$\Bbbk$ \cites{Hall,Jennings:1941,Jennings:1955}. However, for general $G$, the identification of
$\delta_{n,\integers G}$ is a fundamental open problem of group
theory.

\subsection{Results}
We consider the series defined above for the group $A_\Gamma$. The
main purpose of this text is to exhibit numerous relations between
these algebraic objects; detailed definitions and proofs will be given
in subsequent sections. The main tool is an extension to $A_\Gamma$
of Magnus's work on the free group~\cite[Section
5]{MagnusKarrassSolitar}, embedding it into the units of the free
non-commuting power series 
ring. This extension seems first considered in~\cite{Droms:1983}. 

Recall that a commutative ring $\Bbbk$ is fixed. Denote by $\varpi$ the
augmentation ideal of $R_\Gamma$ (i.e.~the ideal of polynomials in partially
commuting variables with zero constant term), and by $\varpi(A_\Gamma)$ the
augmentation ideal of $\Bbbk A_\Gamma$.
\begin{theorem}[Augmentation ideals]\label{thm:dim}
  For all $n$ we have 
  \[\varpi(A_\Gamma)^n/\varpi(A_\Gamma)^{n+1}\cong \varpi^n/\varpi^{n+1}.\]
\end{theorem}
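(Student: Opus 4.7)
The plan is to use a Magnus-style embedding of $\Bbbk A_\Gamma$ into $\overline{R_\Gamma}$ and to exhibit an explicit inverse to the natural map between the associated graded pieces.

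First, I would define a Magnus map $\mu\colon \Bbbk A_\Gamma \to \overline{R_\Gamma}$ by sending each generator $v\in V$ to $1+v$. Since $1+v$ is invertible in $\overline{R_\Gamma}$ with inverse $\sum_{k\ge0}(-v)^k$, and since $(1+v)(1+w)=1+v+w+vw$ equals $(1+w)(1+v)$ exactly when $vw=wv$ in $\overline{R_\Gamma}$, the defining commutation relations of $A_\Gamma$ are respected. Hence $v\mapsto 1+v$ extends to a group homomorphism $A_\Gamma\to\overline{R_\Gamma}^\times$, and by linearity to a $\Bbbk$-algebra homomorphism $\mu$. Since $\mu(v-1)=v\in\varpi$, and $\varpi(A_\Gamma)$ is generated as an ideal by elements $v-1$, multiplicativity gives $\mu\bigl(\varpi(A_\Gamma)^n\bigr)\subseteq\overline{\varpi^n}$.

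Next, I would construct a natural map in the reverse direction,
\[\psi\colon R_\Gamma \longrightarrow \gr_\varpi(\Bbbk A_\Gamma) = \bigoplus_{n\ge1}\varpi(A_\Gamma)^{n-1}/\varpi(A_\Gamma)^n,\]
sending $v\in V$ to the class of $v-1$ in $\varpi(A_\Gamma)/\varpi(A_\Gamma)^2$. To see that this is well defined, observe that in $\Bbbk A_\Gamma$ one has $(v-1)(w-1)-(w-1)(v-1) = vw-wv$, which vanishes whenever $\{v,w\}\in E$; so the defining relations of $R_\Gamma$ hold on the nose already in $\varpi(A_\Gamma)^2/\varpi(A_\Gamma)^3$. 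Surjectivity of $\psi$ onto each graded piece is immediate, since $\varpi(A_\Gamma)^n$ is spanned by products of $n$ elements of the form $g-1$ with $g\in A_\Gamma$, and each $g-1$ can be rewritten modulo $\varpi(A_\Gamma)^2$ as a $\Bbbk$-linear combination of $v-1$'s.

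Finally, I would observe that $\mu$ descends to a graded map $\overline{\mu}\colon\gr_\varpi(\Bbbk A_\Gamma)\to\bigoplus_n \varpi^n/\varpi^{n+1}=R_\Gamma$ because $\overline{\varpi^n}/\overline{\varpi^{n+1}}=\varpi^n/\varpi^{n+1}$ (the filtration comes from the grading). The composition $\overline{\mu}\circ\psi$ sends $v_1\cdots v_n\in R_\Gamma$ to the class of $\mu((v_1-1)\cdots(v_n-1))=v_1\cdots v_n+(\text{terms of degree}\ge n+1)$, which is $v_1\cdots v_n$ in $\varpi^n/\varpi^{n+1}$. Hence $\overline{\mu}\circ\psi=\id_{R_\Gamma}$, so $\psi$ is both surjective and injective on each graded component, giving the desired isomorphism.

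The main obstacle is essentially bookkeeping: verifying that $\mu$ is well defined (that the group relations are killed in $\overline{R_\Gamma}$ and that $\mu$ is continuous enough to land in $\overline{R_\Gamma}$, with $g^{-1}\mapsto(1+(g-1))^{-1}$ expanded as a geometric series), and that the filtration on $\overline{R_\Gamma}$ really has graded pieces $\varpi^n/\varpi^{n+1}$. The conceptual content is the identity $(v-1)(w-1)-(w-1)(v-1)=vw-wv$, which lets the partial commutation relations of $R_\Gamma$ lift literally to $\gr_\varpi(\Bbbk A_\Gamma)$; once this is granted, the Magnus inverse $\overline{\mu}$ forces $\psi$ to be an isomorphism with no further work.
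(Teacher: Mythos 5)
Your argument is correct, but it follows a genuinely different route from the paper's. The paper reduces Theorem~\ref{thm:dim} to Lemma~\ref{lem:dim}, which concerns the same map you call $\psi$ (there called $\alpha$, sending $v$ to $[v-1]$), and proves it is an isomorphism by a Hopf-algebra argument: $\Bbbk A_\Gamma$, $\gr(\Bbbk A_\Gamma)$ and $R_\Gamma$ are cocommutative Hopf algebras generated as algebras in degree $1$, $\alpha$ is a Hopf map that is an isomorphism in degree $1$, and the Milnor--Moore structure theorems then force it to be an isomorphism in all degrees. You instead build an explicit one-sided inverse out of the Magnus map: $\gr(\mu)\circ\psi=\id_{R_\Gamma}$ gives injectivity of $\psi$, and surjectivity is the elementary observation that $\varpi(A_\Gamma)^n/\varpi(A_\Gamma)^{n+1}$ is spanned by classes of $(v_1-1)\cdots(v_n-1)$. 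All the individual steps check out: the well-definedness of $\mu$ and of $\psi$ (via $(v-1)(w-1)-(w-1)(v-1)=vw-wv$), the containment $\mu(\varpi(A_\Gamma)^n)\subseteq\overline{\varpi^n}$, and the identification $\overline{\varpi^n}/\overline{\varpi^{n+1}}\cong\varpi^n/\varpi^{n+1}$ coming from the grading. Your approach is more elementary and self-contained (no Milnor--Moore), and it essentially anticipates the content of Proposition~\ref{prop:Magnus}, where the paper observes that $\gr(\mu)$ is inverse to $\alpha$; it is also close in spirit to the alternative proof of Darn\'e recorded after Lemma~\ref{lem:dim}. What the paper's route buys is the extra information that the isomorphism is one of \emph{Hopf} algebras, which is exploited later (e.g.\ in identifying primitives with the Lie algebras $L_\Gamma$ and $L_{\Gamma,p}$); your argument yields only an isomorphism of graded algebras, which is all Theorem~\ref{thm:dim} asks for. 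One cosmetic point: $\mu\bigl((v_1-1)\cdots(v_n-1)\bigr)$ equals $v_1\cdots v_n$ exactly, with no higher-degree correction terms, though this does not affect your computation modulo $\overline{\varpi^{n+1}}$.
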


We remind the reader that Koszul algebras are a particular kind of associative algebras
(see~\cite{Priddy:1970} or Section~\ref{ss:coh}) for which a ``small''
projective resolution may easily be computed. Moreover, there is the important
concept of Koszul duality. We obtain the following results, which for
$\Bbbk=\rationals$ already appear in~\cite{PapadimaSuciu:2006}.
\begin{theorem}[Group cohomology]\label{thm:coh}
  Let $S^1$ be the circle with base point $*$. The following subspace of the torus $(S^1)^V$ is a classifying space for $A_\Gamma$:
\begin{equation}\label{eq:classifying_space}
X_\Gamma=\bigcup_{C\subseteq V\text{ a clique}}(S^1)^C\times\{*\}^{V\setminus
  C}.
\end{equation}

  We have $H^*(A_\Gamma;\Bbbk)=S_\Gamma$.

  The rings $R_\Gamma$ and $S_\Gamma$ are Koszul algebras, and Koszul duals to each other: $(S_\Gamma)^!=R_\Gamma$.
\end{theorem}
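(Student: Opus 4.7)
The plan is to address the three assertions in turn.

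First, for the classifying space, I would equip $X_\Gamma$ with its natural cube-complex structure, obtained by giving each subtorus $(S^1)^C$ the standard product CW decomposition. Van Kampen's theorem applied to the $2$-skeleton immediately yields $\pi_1(X_\Gamma) \cong A_\Gamma$: the $1$-skeleton is $\bigvee_{v \in V} S^1$, contributing the free group on $V$, and the $2$-cells come precisely from the tori $(S^1)^{\{v,w\}}$ for edges $\{v,w\} \in E$, each imposing the commutator relation $[v,w] = 1$. For asphericity, I would verify Gromov's link condition: the link of the unique vertex of $X_\Gamma$ is the simplicial complex whose simplices are the cliques of $\Gamma$, which is tautologically a flag complex. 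Hence $X_\Gamma$ is a locally CAT(0) cube complex, so its universal cover is CAT(0), and in particular contractible.

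Second, for the cohomology, I would work with the cellular cochain complex of $X_\Gamma$. Its $k$-cells are in bijection with cliques of $\Gamma$ of size $k$ (with a chosen orientation), and within each subtorus $(S^1)^C$ the cellular differential vanishes as for any torus; gluing along subtori only identifies cells and introduces no new differentials. Thus $H^*(X_\Gamma;\Bbbk)$ is $\Bbbk$-free on cliques, with one generator in degree $|C|$ for each clique $C$. The cup product restricted to a subtorus is the standard exterior algebra product (giving $v^2 = 0$ and $vw = -wv$), and for a non-edge $\{v,w\}$ the classes $v$ and $w$ lie in no common subtorus, hence $v \smile w = 0$. This is precisely the presentation of $S_\Gamma$.

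Third, for Koszul duality, the identification $(S_\Gamma)^! = R_\Gamma$ is a direct orthogonality computation: the quadratic relation space of $S_\Gamma$ inside $V \otimes V$ is spanned by $v \otimes v$ (all $v$), $v \otimes w + w \otimes v$ (all $v,w$), and $v \otimes w$ (for non-edges), whose orthogonal complement in $V^* \otimes V^*$ is spanned by $v^* \otimes w^* - w^* \otimes v^*$ for edges $\{v,w\} \in E$, which is the relation space of $R_\Gamma$. To prove that $R_\Gamma$ is Koszul (whence so is its dual $S_\Gamma$), I would exhibit a PBW basis: fix a linear order on $V$ and take the Cartier--Foata normal form on monomials, the unique representatives for the equivalence generated by commuting adjacent letters in the prescribed direction. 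This furnishes a quadratic Gr\"obner basis for the commutation ideal, so Priddy's theorem implies Koszulness. The main obstacle I anticipate is verifying that the Cartier--Foata rewriting system is confluent---classical in the theory of trace monoids, but requiring a careful combinatorial argument.
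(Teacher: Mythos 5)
Your treatment of the classifying space and of the cohomology ring is correct and is essentially the paper's argument: the cube-complex structure and van Kampen for $\pi_1$, Gromov's link condition (the link of the vertex is the clique complex of $\Gamma$, tautologically flag) for asphericity, vanishing cellular differentials giving a free basis indexed by cliques, and naturality with respect to the subtori to identify the cup product with $S_\Gamma$. Your computation of the quadratic dual $(S_\Gamma)^!=R_\Gamma$ is also the one in the paper.

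The gap is in your proof of Koszulness: the commutation relations do \emph{not} form a quadratic Gr\"obner basis of $R_\Gamma$ in general, for any linear order on $V$. Already for the path $a-b-c$ (edges $\{a,b\},\{b,c\}$, non-edge $\{a,c\}$) with the order $a>b>c$ and leading terms $ab$, $bc$, the element $acb-bac=(ab-ba)c-a(bc-cb)$ lies in the commutation ideal, its leading term $acb$ contains neither $ab$ nor $bc$ as a factor, and indeed $abc$ rewrites to the two distinct irreducible words $bac$ and $acb$; so Buchberger's criterion fails. For this graph one can repair matters by making $b$ extremal in the order, but for the $5$-cycle no order works: confluence of the overlap $xyz$ with $\{x,y\},\{y,z\}\in E$ and $\{x,z\}\notin E$ forces every vertex to be a local extremum of the order relative to its two neighbours, and local maxima and minima cannot alternate around an odd cycle. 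Relatedly, the Cartier--Foata (or lexicographically least) representatives do form a $\Bbbk$-basis of $R_\Gamma$, but they are not the monomials avoiding a set of quadratic leading terms --- in the example above $acb$ is excluded even though both of its length-two factors are normal --- so they are not a PBW basis in Priddy's sense and his theorem does not apply. The fix, and the route the paper takes, is to dualize first: prove Koszulness for $S_\Gamma$, whose extra relations $v\wedge w$ for non-edges are \emph{monomials} in the exterior algebra and hence trivially a quadratic Gr\"obner basis, and then invoke the fact that a quadratic algebra is Koszul if and only if its quadratic dual is. (The paper also records a second, self-contained argument: write down the Koszul complex $\Hom_\Bbbk(S_\Gamma,R_\Gamma)$ and exhibit an explicit contracting homotopy.)
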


\begingroup
\renewcommand\thetheorem{\arabic{section}.\arabic{theorem}i}
\begin{theorem}[Central series and dimension subgroups]\label{thm:cs1}
  We have
  \begin{equation*}
\gamma_n=\gamma_{n,0} \quad\text{ and}\quad\bigcap_{n\in\integers} \gamma_{n,0}= \bigcap_{n\in\integers}\gamma_{n,p}=\{1\}.
\end{equation*}
In particular, $A_\Gamma$ is finitely generated and residually torsion-free
nilpotent, so (by~\cite{Gruenberg:1957}*{Theorem~2.1})
$A_\Gamma$ is also a residually finite $p$-group for every $p$.
\end{theorem}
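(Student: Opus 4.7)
The plan is to push everything through the Magnus-Droms embedding
\[ \mu \colon A_\Gamma \hookrightarrow \overline{R_\Gamma}^{\times}, \qquad v \mapsto 1 + v, \]
whose injectivity (both for $\Bbbk = \integers$ and for $\Bbbk = \finfield$) should be available from the earlier sections of the paper following~\cite{Droms:1983}. The defining relations force $[1+v,1+w]=0$ whenever $\{v,w\}\in E$, so $\mu$ is a well-defined homomorphism, and the standard Magnus commutator estimate
\[ \mu([g,h])-1 \equiv (\mu(g)-1)(\mu(h)-1)-(\mu(h)-1)(\mu(g)-1) \pmod{\varpi^{m+n+1}} \]
(valid when $\mu(g)-1\in\varpi^m$ and $\mu(h)-1\in\varpi^n$) shows by induction on $n$ that $\mu(\gamma_n)\subseteq 1+\varpi^n$.

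For the equality $\gamma_n = \gamma_{n,0}$, I would show that each quotient $\gamma_n/\gamma_{n+1}$ is torsion-free as an abelian group; an easy induction on $n$ then eliminates $k$-th roots lying outside $\gamma_n$ for $k\neq 0$. Working over $\Bbbk = \integers$, the assignment $g \mapsto \mu(g)-1 \bmod \varpi^{n+1}$ descends to a well-defined map $\gamma_n/\gamma_{n+1} \to \varpi^n/\varpi^{n+1}$, and Theorem~\ref{thm:dim} identifies the target with the degree-$n$ piece of $R_\Gamma$, which is $\integers$-free on monomials. The key technical point, and where I expect the real work to lie, is proving this map injective: one must show that $g\in\gamma_n$ with $\mu(g)\in 1+\varpi^{n+1}$ forces $g\in\gamma_{n+1}$, which is a Magnus-style dimension subgroup statement and ultimately rests on a comparison of $\gr A_\Gamma$ with the free partially-commuting Lie algebra $L_\Gamma$. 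Granting this, $\gamma_n/\gamma_{n+1}$ embeds in a torsion-free abelian group and is itself torsion-free.

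The two intersection statements then follow formally. From $\gamma_n \subseteq \mu^{-1}(1+\varpi^n)$ and $\bigcap_n (1+\varpi^n) = \{1\}$ in the completion $\overline{R_\Gamma}$, injectivity of $\mu$ yields $\bigcap_n \gamma_n = \{1\}$, hence also $\bigcap_n \gamma_{n,0} = \{1\}$ by the first part. For the $p$-central series I would invoke the Jennings-Lazard identification $\gamma_{n,p} = \delta_{n,\finfield A_\Gamma}$ recalled in the preamble, so that
\[ \bigcap_n \gamma_{n,p} = \ker\bigl(A_\Gamma \to \overline{\finfield A_\Gamma}^{\times}\bigr); \]
the linear extension of the characteristic-$p$ Magnus map factors this arrow through $\overline{R_\Gamma}^{\times}$ and is injective, forcing the kernel to be trivial. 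The residual properties are then automatic: the quotients $A_\Gamma/\gamma_n$ are finitely generated, torsion-free, and nilpotent, and they separate points, so Gruenberg's theorem (as cited in the statement) upgrades this to residual finiteness as a $p$-group for every $p$.
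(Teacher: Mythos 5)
Your overall strategy coincides with the paper's: embed $A_\Gamma$ into $\overline{R_\Gamma}^\times$ via the Magnus map, use its injectivity over $\integers$ and over $\finfield$ together with $\bigcap_n(1+\overline\varpi^n)=\{1\}$ to kill the intersections, invoke Hall--Jennings to identify $\gamma_{n,0}$ and $\gamma_{n,p}$ with dimension subgroups, and reduce $\gamma_n=\gamma_{n,0}$ to torsion-freeness of each $\gamma_n/\gamma_{n+1}$. Those parts are carried out in Sections~\ref{ss:magnus} and~\ref{ss:central} and are fine as you state them.

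The genuine gap is exactly the step you flag and then dispose of with ``granting this'': the injectivity of $\gamma_n/\gamma_{n+1}\to\varpi^n/\varpi^{n+1}$, $[g]\mapsto[\mu(g)-1]$. This is not a routine verification --- for a general group a representation into a filtered algebra induces a map on the associated graded that can have a large kernel, and proving it is injective here \emph{is} the content of the torsion-freeness claim; nothing in your write-up supplies it. The paper closes this hole in Section~\ref{ss:lie} by going the other way: one builds a surjection $\nu\colon (L_\Gamma)_n\to(\gamma_n/\gamma_{n+1})\otimes\Bbbk$ by evaluating bracket arrangements in the group (well defined since $[v,w]=1$ for $\{v,w\}\in E$), and checks that the composite $\mu_L\circ\nu$ is the canonical inclusion $L_\Gamma\hookrightarrow U(L_\Gamma)=R_\Gamma$, which is injective by Theorem~\ref{thm:lie1} (the Milnor--Moore/PBW input). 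Hence $\nu$ is an isomorphism, $\gamma_n/\gamma_{n+1}\cong(L_\Gamma)_n$ is $\integers$-free, and torsion-freeness follows. Note also that your derivation of $\bigcap_n\gamma_{n,0}=\{1\}$ is routed through the first part and therefore inherits this gap (it could instead be obtained directly from $\gamma_{n,0}=\delta_{n,\rationals A_\Gamma}$ and rational Magnus injectivity, as the paper does). As written, the equality $\gamma_n=\gamma_{n,0}$ is assumed rather than proved; you need to import or reprove the identification $\bigoplus_n\gamma_n/\gamma_{n+1}\cong L_\Gamma$ of Theorem~\ref{thm:lie2} to complete the argument.
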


\renewcommand\thetheorem{\arabic{section}.\arabic{theorem}ii}
\addtocounter{theorem}{-1}
\begin{theorem}[Central series and dimension subgroups]\label{thm:cs2}
  There is a faithful representation
  \begin{equation*}
\mu\colon A_\Gamma\to\overline{R_\Gamma}^\times;
\quad v\mapsto1+v\quad\text{for } v\in V.
\end{equation*}
The corresponding generalized dimension subgroups satisfy
  \[\delta_{n,\mu}=\begin{cases}
      \gamma_{n,0} & \text{ if $\Bbbk$ has characteristic }0,\\
      \gamma_{n,p} & \text{  if $\Bbbk$ has characteristic }p.
    \end{cases}\]
    Together with Theorem~\ref{thm:dim} we obtain an isomorphism of filtered associative\footnote{but not Hopf algebras; see Theorem~\ref{thm:malcev} below!} $\Bbbk$-algebras
  \[\overline{\Bbbk A_\Gamma}\coloneqq\lim (\Bbbk A_\Gamma/\varpi(G)^n)\xrightarrow{\cong} \lim (R_\Gamma/\varpi^n)=\overline{R_\Gamma}.\]
  In particular, the classical dimension subgroups coincide:
  \begin{equation*}
    \delta_{n,\Bbbk A_\Gamma} = \delta_{n,\mu}.
  \end{equation*}
\end{theorem}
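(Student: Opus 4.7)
The plan is to proceed in four stages: first, construct $\mu$ and extend it to an algebra map on completions; second, use Theorem~\ref{thm:dim} to upgrade this to an isomorphism and conclude $\delta_{n,\Bbbk A_\Gamma}=\delta_{n,\mu}$; third, match $\delta_{n,\mu}$ with the appropriate central series; and fourth, deduce faithfulness from Theorem~\ref{thm:cs1}.

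The defining relations $vw=wv$ for $\{v,w\}\in E$ lift to $(1+v)(1+w)=(1+w)(1+v)$ in $\overline{R_\Gamma}$, so $\mu$ is well-defined as a group homomorphism; each $1+v$ is a unit with inverse $\sum_{k\ge0}(-v)^k$. Linear extension yields a $\Bbbk$-algebra map $\mu\colon\Bbbk A_\Gamma\to\overline{R_\Gamma}$, and since $\mu(v-1)=v\in\varpi$ one has $\mu(\varpi(A_\Gamma)^n)\subseteq\varpi^n$; completing produces a filtered map $\overline{\mu}\colon\overline{\Bbbk A_\Gamma}\to\overline{R_\Gamma}$. To see that $\overline{\mu}$ is an isomorphism, observe that $\gr\overline{\mu}$ surjects onto $R_\Gamma$ because the degree-$1$ images $v=\mu(v-1)$ generate the target. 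Theorem~\ref{thm:dim} gives that the source and target of $\gr\overline{\mu}$ are isomorphic $\Bbbk$-modules in each degree, and inspection on monomial bases shows that the surjection is an isomorphism. Completeness and separatedness of both filtrations then promote this to an isomorphism of filtered algebras, and pulling back $1+\varpi^n$ yields $\delta_{n,\Bbbk A_\Gamma}=\delta_{n,\mu}$.

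The identification with central series splits into two inclusions. The inclusion $\gamma_{n,p}\subseteq\delta_{n,\mu}$ (resp.\ $\gamma_{n,0}\subseteq\delta_{n,\mu}$) is a direct computation: the commutator bound $[1+\varpi^a,\,1+\varpi^b]\subseteq 1+\varpi^{a+b}$ gives $\mu(\gamma_m)\subseteq 1+\varpi^m$ by induction, and the freshman's dream $(1+x)^{p^i}\equiv 1+x^{p^i}$ modulo higher degree (valid in characteristic $p$) upgrades this to $\mu(\gamma_m^{p^i})\subseteq 1+\varpi^{mp^i}$; the product decomposition of $\gamma_{n,p}$ concludes. For the reverse inclusion I would invoke the classical dimension-subgroup theorems of Jennings and Hall, which give $\delta_{n,\Bbbk_0 A_\Gamma}=\gamma_{n,p}$ (resp.\ $\gamma_{n,0}$) over a prime field or $\rationals$, and reduce from arbitrary $\Bbbk$ via the base-change identity $\varpi(\Bbbk G)^n=\Bbbk\otimes_{\Bbbk_0}\varpi(\Bbbk_0 G)^n$ for the prime subring $\Bbbk_0\subseteq\Bbbk$. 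Faithfulness then follows at once from $\ker\mu\subseteq\bigcap_n\delta_{n,\mu}$, which is $\{1\}$ by Theorem~\ref{thm:cs1}.

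The hardest step is the reverse inclusion $\delta_{n,\mu}\subseteq\gamma_{n,0}$ in characteristic $0$: Hall's theorem is unconditionally available only over $\rationals$, and reducing from a general char-$0$ ring is in effect the integral statement $\delta_{n,\integers A_\Gamma}=\gamma_n$, which is open for arbitrary groups. For $A_\Gamma$ specifically one sidesteps this difficulty by identifying $\gr A_\Gamma$ with the partially commuting Lie algebra $L_\Gamma$ and embedding $L_\Gamma\hookrightarrow R_\Gamma$ via a PBW-type theorem---both of which are established elsewhere in the paper.
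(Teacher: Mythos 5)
Most of your proposal tracks the paper's own route: the extension of $\mu$ to $\overline{\Bbbk A_\Gamma}$, the upgrade to an isomorphism via Theorem~\ref{thm:dim} (the paper does this through Lemma~\ref{lem:dim} and Proposition~\ref{prop:Magnus}, showing $\gr(\mu)$ is inverse to the natural map $\alpha\colon R_\Gamma\to\gr(\Bbbk A_\Gamma)$ and then passing to inverse limits), the reduction of $\delta_{n,\mu}$ to the prime subring using that $\mu$ lands in the subring generated by $V$ and that $R_\Gamma$ is graded and $\Bbbk$-free, and the appeal to Jennings and Hall for $\delta_{n,\Bbbk_0 A_\Gamma}=\gamma_{n,p}$ resp.\ $\gamma_{n,0}$. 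Your closing worry about the integral dimension subgroup problem is also resolved essentially as you suggest: since $R_\Gamma$ is a graded free $\integers$-module, $\varpi_{\integers}^n=\varpi_{\rationals}^n\cap R_\Gamma^{\integers}$, so $\delta_{n,\mu}$ over $\integers$ agrees with $\delta_{n,\mu}$ over $\rationals$, and no new dimension-subgroup input is needed.

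The genuine gap is the faithfulness of $\mu$. You derive $\ker\mu=\{1\}$ from $\bigcap_n\delta_{n,\mu}=\bigcap_n\gamma_{n,0}=\{1\}$ (resp.\ $\bigcap_n\gamma_{n,p}=\{1\}$) ``by Theorem~\ref{thm:cs1}'', but in this paper the triviality of those intersections is itself obtained from the injectivity of $\mu$: the only proof of residual torsion-free nilpotence (and hence, via Gruenberg, of residual finite-$p$-ness) on offer is the identification $\gamma_{n,0}=\delta_{n,\mu}$ together with $\bigcap_n(1+\overline\varpi^n)=\{1\}$ and $\ker\mu=\{1\}$. So your argument is circular as written. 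The paper instead proves injectivity directly and combinatorially in Lemma~\ref{lem:wade}: for $g\neq 1$ with $\Gamma$-reduced form $v_1^{e_1}\cdots v_n^{e_n}$ and $e_j=p^{s_j}\ell_j$ with $p\nmid\ell_j$, the identity $(1+v_j)^{e_j}=(1+v_j^{p^{s_j}})^{\ell_j}$ in characteristic $p$ shows that the $\Gamma$-reduced monomial $v_1^{p^{s_1}}\cdots v_n^{p^{s_n}}$ occurs in $\mu(g)$ exactly once, with coefficient $\ell_1\cdots\ell_n\neq 0$ in $\finfield$; the characteristic-$0$ case reduces to this or to Wade's result. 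This lemma is precisely the new input the authors highlight, and your proposal has no substitute for it. The gap could be closed either by supplying such a normal-form argument or by importing residual (torsion-free) nilpotence of $A_\Gamma$ from the literature as an external fact, but not by citing Theorem~\ref{thm:cs1} of this paper.
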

\endgroup

The Lie algebras $L_\Gamma$ and $L_{\Gamma,p}$ are tightly connected to their associative counterparts:
\begingroup
\renewcommand\thetheorem{\arabic{section}.\arabic{theorem}i}
\begin{theorem}[Lie algebras]\label{thm:lie1}
  The algebra $R_\Gamma$ is a Hopf algebra. If the ring $\Bbbk$ is a $\integers$-free module then we have
  \begin{equation}
  L_\Gamma\cong\text{Primitives}(R_\Gamma) \quad\text{and}\quad R_\Gamma\cong
  U(L_\Gamma),\label{eq:char0}
  \end{equation}
  the universal enveloping algebra of $L_\Gamma$, while if $\Bbbk$ is an $\finfield$-algebra then
  \begin{equation}
  L_{\Gamma,p}\cong\text{Primitives}(R_\Gamma) \quad\text{and}\quad
  R_\Gamma\cong U_p(L_{\Gamma,p}),\label{eq:charp}
  \end{equation}
  the $p$-universal enveloping algebra of $L_{\Gamma,p}$. The Lie algebra cohomology of $L_\Gamma$ is
  \[H^*(L_\Gamma;\Bbbk)\cong S_\Gamma.\]

  All the above isomorphisms are natural, in the sense that they are induced
  by the identity map $V\to V$, and therefore compatible with homomorphisms
  induced by a map of graphs $V\to V'$.
\end{theorem}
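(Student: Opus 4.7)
My plan is to reduce every claim to universal properties combined with the Koszulity of $R_\Gamma$ established in Theorem~\ref{thm:coh}. For the Hopf structure, I define on generators $\Delta(v)=v\otimes 1+1\otimes v$, $\epsilon(v)=0$, $S(v)=-v$, and extend $\Delta,\epsilon$ multiplicatively and $S$ as an anti-homomorphism. Well-definedness amounts to checking that the defining relations $vw=wv$ for $\{v,w\}\in E$ are respected: if $v,w$ commute in $R_\Gamma$ then $\Delta(v),\Delta(w)$ commute in $R_\Gamma\otimes R_\Gamma$, and $S(v),S(w)$ commute in $R_\Gamma$. Coassociativity, counit and antipode axioms then hold on generators and propagate by multiplicativity.

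Next I match $R_\Gamma$ with $U(L_\Gamma)$ by comparing universal properties. The composition $V\hookrightarrow L_\Gamma\subseteq U(L_\Gamma)$ satisfies all commutation relations defining $R_\Gamma$, so it extends to a $\Bbbk$-algebra map $\phi\colon R_\Gamma\to U(L_\Gamma)$. Conversely, since $[v,w]=0$ holds in $R_\Gamma$ whenever $\{v,w\}\in E$, the assignment $v\mapsto v$ is a Lie algebra map $L_\Gamma\to R_\Gamma$ and extends to an associative map $\psi\colon U(L_\Gamma)\to R_\Gamma$. Both $\phi$ and $\psi$ fix the common generating set $V$, hence are mutually inverse. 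The restricted case is identical, substituting $U_p(L_{\Gamma,p})$ and noting that $R_\Gamma$, as a $\Bbbk$-algebra over $\finfield$, carries a $p$-power map compatible with $L_{\Gamma,p}$.

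For primitives, I would invoke Poincare--Birkhoff--Witt: whenever $L_\Gamma$ is a free $\Bbbk$-module (as holds when $\Bbbk$ is $\integers$-free, by base change from $\integers$), the primitives of $U(L_\Gamma)$ coincide with $L_\Gamma$; the restricted PBW theorem covers the characteristic $p$ case analogously. The cohomology identification then follows by combining the standard equality $H^*(L_\Gamma;\Bbbk)\cong\Ext^*_{U(L_\Gamma)}(\Bbbk,\Bbbk)$ with the algebra isomorphism above and the Koszul duality of Theorem~\ref{thm:coh}:
\[H^*(L_\Gamma;\Bbbk)\cong\Ext^*_{R_\Gamma}(\Bbbk,\Bbbk)\cong R_\Gamma^!=S_\Gamma.\]
Naturality is automatic, since each isomorphism is induced by the identity map $V\to V$ and therefore transports functorially along graph morphisms.

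The main obstacle is establishing $\Bbbk$-freeness of $L_\Gamma$ (resp.~$L_{\Gamma,p}$), required to invoke PBW over a general coefficient ring. I would handle this by exhibiting a Hall-type basis of basic commutators adapted to $\Gamma$--discarding any commutator whose extremal letters span an edge--extending the classical construction for free Lie algebras. This simultaneously produces a $\Bbbk$-basis for $L_\Gamma$ and a PBW basis for $R_\Gamma$, after which the primitives claim and the Lie cohomology computation are formal consequences of the preceding two steps.
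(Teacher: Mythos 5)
Your overall route coincides with the paper's: the identification $R_\Gamma\cong U(L_\Gamma)$ (resp.\ $U_p(L_{\Gamma,p})$) by playing the two universal properties off each other is exactly how the paper argues (``a presentation of a Lie algebra, read as a presentation of associative algebras, presents its enveloping algebra''), the Hopf structure on generators is the standard one from Proposition~\ref{prop:Hopf_background}, the primitives are identified by the same general principle (the paper quotes Serre rather than re-proving it via PBW), and the chain $H^*(L_\Gamma;\Bbbk)\cong\Ext_{R_\Gamma}(\Bbbk,\Bbbk)\cong S_\Gamma$ via Koszul duality is verbatim the paper's Section~\ref{ss:lie} argument. So far, so good.

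The gap is in your last step. First, a logical slip: $\Bbbk$-freeness of $L_\Gamma$ is not what rules out extra primitives --- over $\Bbbk=\finfield$ the module $L_\Gamma$ is free, yet $v^p$ is primitive in the \emph{unrestricted} $U(L_\Gamma)$. What you need is PBW (for which freeness of $L_\Gamma$ over $\Bbbk$ suffices) \emph{together with} the $\integers$-freeness of $\Bbbk$, which guarantees that the binomial coefficients appearing in $\Delta(x^a)$ are nonzero in $\Bbbk$; in characteristic $p$ one must pass to $U_p$, as you note. Second, and more seriously, your proposed proof of freeness of $L_\Gamma$ --- ``a Hall-type basis discarding any commutator whose extremal letters span an edge'' --- is not a proof. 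The rule is not well defined (which letters of a nested commutator are extremal?), and the quotient by $[v,w]=0$ does not merely kill basis elements: Jacobi forces identifications such as $[[b,c],a]=[[a,c],b]$ once $[a,b]=0$, so you would have to prove both spanning and independence of the surviving set. That statement is essentially the Duchamp--Krob theorem on Lyndon bases of free partially commutative Lie algebras, a genuine result and not a routine modification of the free case. The paper avoids this entirely: it cites the general primitives statement, and where it later needs $\integers$-freeness of $L_\Gamma$ it gets it from the embedding $L_\Gamma\subseteq R_\Gamma$ together with the explicit $\integers$-basis of $R_\Gamma$ by $\Gamma$-reduced monomials (a submodule of a free module over a PID is free). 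Either supply a real proof of your basis claim or replace that step by one of these softer arguments.
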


The Lie algebra associated with the lower central series was already determined in~\cite{DuchampKrob1} as $L_\Gamma$. We extend this result as follows:
\renewcommand\thetheorem{\arabic{section}.\arabic{theorem}ii}
\addtocounter{theorem}{-1}
\begin{theorem}[Lie algebras]\label{thm:lie2}
  For any ring $\Bbbk$, we have, as Lie algebras,
  \[L_\Gamma\cong\bigoplus_{n\ge1}(\gamma_n/\gamma_{n+1})\otimes_\integers\Bbbk.\]
  If $\finfield\subseteq\Bbbk$ then as restricted Lie algebras
  \begin{equation*}
    L_{\Gamma,p}\cong
  \bigoplus_{n\ge1}(\gamma_{n,p}/\gamma_{n+1,p})\otimes_\integers\Bbbk.
\end{equation*}  
  If $\finfield\subseteq\Bbbk$ and $p\ge3$ then with $\Bbbk[\pi]$ the polynomial ring in one degree-$1$ variable $\pi$
  \begin{equation*}
L_\Gamma\otimes_\Bbbk\Bbbk[\pi]\cong\bigoplus_{n\ge1}(\lambda_{n,p}/\lambda_{n+1,p})\otimes_\integers\Bbbk;
\end{equation*}
 under that isomorphism, multiplication by $\pi$ corresponds to the map induced by $\lambda_{n,p}\ni g\mapsto g^p\in \lambda_{n+1,p}$.
  
  All the above isomorphisms are natural, in the sense that they are induced
  by the identity map $V\to V$, and therefore compatible with homomorphisms
  induced by a map of graphs $V\to V'$.
\end{theorem}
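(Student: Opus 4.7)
The plan is to reduce all three isomorphisms to statements about primitives in the associated graded Hopf algebra of $\Bbbk A_\Gamma$, identified with $R_\Gamma$ via the previous theorems.

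First, combining Theorem~\ref{thm:cs2}'s filtered isomorphism $\overline{\Bbbk A_\Gamma}\cong\overline{R_\Gamma}$ with Theorem~\ref{thm:dim} yields $\gr(\Bbbk A_\Gamma)\cong R_\Gamma$ as graded associative algebras. Although this is not a Hopf isomorphism at the filtered level (cf.\ the footnote to Theorem~\ref{thm:cs2}), it becomes one on the associated graded: for $v\in V\subseteq\gamma_1$, the decomposition $\Delta(v-1)=(v-1)\otimes 1+1\otimes(v-1)+(v-1)\otimes(v-1)$ has its cross term in bi-filtration $(1,1)$, which does not appear in the degree-$1$ part of the graded coproduct; thus the classes of the $v$'s become primitive, matching the standard coproduct of $R_\Gamma$. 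Since both sides are generated in degree~$1$, the Hopf structures agree throughout.

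For the lower central series, I construct mutually inverse $\Bbbk$-Lie algebra maps between $L_\Gamma$ and $L_\gamma:=\bigoplus(\gamma_n/\gamma_{n+1})\otimes_\integers\Bbbk$, both induced by $V\to V$. The forward map exists by the universal property of $L_\Gamma$, since each relation $[v,w]=0$ for $\{v,w\}\in E$ is satisfied in $L_\gamma$ ($[v,w]=1$ in $A_\Gamma$). The backward map sends the class of $g\in\gamma_n$ to $(g-1)\bmod\varpi(A_\Gamma)^{n+1}\in\gr^n(\Bbbk A_\Gamma)\cong R_\Gamma^n$; it lands in $\mathrm{Prim}(R_\Gamma)=L_\Gamma$ by Theorem~\ref{thm:lie1} (taking $\Bbbk=\integers$, which is $\integers$-free) by the same bi-filtration argument, and it is a Lie algebra morphism by the standard group-algebra identities $(gh-1)\equiv(g-1)+(h-1)$ and $([g,h]-1)\equiv(g-1)(h-1)-(h-1)(g-1)$ modulo higher filtration. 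Both compositions fix $V$ in degree~$1$; as both graded Lie algebras are generated in degree~$1$, the compositions are identities, so the two maps are mutually inverse. This proves the claim over $\integers$ (using Theorem~\ref{thm:cs1} to know $\gamma_n/\gamma_{n+1}$ is torsion-free), and extending scalars gives arbitrary $\Bbbk$. The restricted case over $\finfield\subseteq\Bbbk$ runs verbatim with restricted Lie algebras in place of Lie algebras, observing that the restricted $p$-power $v^{[p]}=v^p$ in $R_\Gamma$ corresponds to the class of $v^p\in\gamma_{p,p}/\gamma_{p+1,p}$.

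For the $\lambda$-series with $p\ge 3$, I invoke Lazard's theorem~\cite{Lazard:1954}: for any group $G$ whose lower-central quotients are torsion-free and any prime $p\ge 3$,
\begin{equation*}
\bigoplus_{n\ge 1}(\lambda_{n,p}/\lambda_{n+1,p})\otimes_\integers\finfield\;\cong\;\Bigl(\bigoplus_{n\ge 1}\gamma_n/\gamma_{n+1}\Bigr)\otimes_\integers\finfield[\pi],
\end{equation*}
with multiplication by $\pi$ corresponding to the $p$-th power. The torsion-freeness hypothesis holds for $A_\Gamma$ by Theorem~\ref{thm:cs1}. Composing with the lcs isomorphism already established and extending scalars from $\finfield$ to $\Bbbk$ concludes. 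The main obstacle lies here: the injectivity of the $p$-power maps $\gamma_m/\gamma_{m+1}\to\lambda_{m+i,p}/\lambda_{m+i+1,p}$ that makes this graded decomposition direct is the delicate ingredient, and it fails in characteristic~$2$ due to quadratic interference in the restricted $p$-operation, which is why the hypothesis $p\ge 3$ is required.
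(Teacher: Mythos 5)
Your treatment of the first two isomorphisms is essentially the paper's own argument: your ``forward'' and ``backward'' maps are exactly the paper's $\nu$ and $\mu_L$ from Section~\ref{ss:lie}, and concluding via generation in degree~$1$ is equivalent to the paper's ``$\nu$ surjective, $\mu_L\circ\nu=\text{inclusion}$ injective''. One logical caveat: you invoke Theorem~\ref{thm:cs1} to get torsion-freeness of $\gamma_n/\gamma_{n+1}$ before extending scalars, but in the paper the equality $\gamma_n=\gamma_{n,0}$ (hence that torsion-freeness) is itself \emph{deduced} from the integral case of this very isomorphism together with the $\integers$-freeness of $L_\Gamma\subseteq R_\Gamma$. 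The circularity is harmless and easily repaired (argue over $\integers$ first, read off freeness of $\gamma_n/\gamma_{n+1}$ from $L_\Gamma$, then base-change), but as written the dependency is backwards.

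The third claim is where your proposal has a genuine gap. You reduce everything to a ``theorem of Lazard'' asserting that for \emph{any} group with torsion-free lower-central quotients and $p\ge3$ one has $\bigoplus_n(\lambda_{n,p}/\lambda_{n+1,p})\cong\gr_\gamma(G)\otimes_\integers\finfield[\pi]$. No such off-the-shelf statement is used (or, apparently, available) in the paper: the authors explicitly flag the computation of the exponent-$p$ series as new, and they prove it by a concrete argument specific to $A_\Gamma$ (Lemmas~\ref{lem:assgraded_exponentp} and~\ref{lem:exponentp}): filter $R_\Gamma$ over $\integers$ by powers of $\varpi_p=\langle p,V\rangle$, identify the associated graded with $R_\Gamma\otimes_\integers\finfield[\pi]$, show $\mu(\lambda_{n,p})\subseteq1+\overline\varpi_p^{\,n}$, deduce injectivity of the induced graded map from injectivity of the Magnus map, establish $\finfield[\pi]$-linearity of the $p$-power operation via the Hall--Petrescu identities (this is precisely where $p\ge3$ enters, through $p\mid\binom p2$), and finish using the presentation of $L_\Gamma\otimes_\integers\finfield[\pi]$ by generators $V$ and relations $[v,w]=0$. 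Lazard's actual results of this type concern the free group and are themselves proved by the Magnus embedding --- the very technique the paper is extending; they do not transfer to quotients by any soft argument, since the relations imposed on $\gr_\lambda$ are not formally determined by those imposed on $\gr_\gamma$. You yourself identify the injectivity of the graded $p$-power maps as ``the delicate ingredient'', but then supply no proof of it beyond the citation. As it stands, the heart of the theorem --- and the part the paper regards as its original contribution --- is assumed rather than proved.
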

\endgroup

For a graded algebra $R=\bigoplus_{n\ge0}R_n$ over $\Bbbk$ such that each $R_n$ is a finitely generated free $\Bbbk$-module, recall that its \emph{Poincar\'e series} is the power series 
\[\Phi_R(t)=\sum_{n\ge0}\rank(R_n)t^n.\]
For a group $G=\langle X\rangle$, its \emph{growth series} is
$\Phi_G(t)=\sum_{g\in G}t^{|g|}$, with $|g|$ denoting the word length of $g\in
G$ (word length and growth series depend on the fixed generating set $X$). The
first two claims of the following result appear in~\cite{DuchampKrob2}:
\begin{theorem}[Poincar\'e and growth series]\label{thm:growth}
  The Poincar\'e series of $S_\Gamma$ is
  \begin{equation*}
\Phi_{S_\Gamma}(t)=\sum_{n=0}^{\abs{V}}c_n(\Gamma)t^n,
\end{equation*}
where $c_n(\Gamma)$ denotes the number of cliques of size $n$ (i.e.~complete subgraphs of $\Gamma$ with $n$ vertices).

  The Poincar\'e series of $R_\Gamma$ and $S_\Gamma$ are connected by the relation
  \[\Phi_{R_\Gamma}(t)\cdot\Phi_{S_\Gamma}(-t)=1,\]
  and the growth series of $A_\Gamma$ is
  \[\Phi_{A_\Gamma}(t) = \Phi_{R_\Gamma}\bigg(\frac{2t}{1+t}\bigg).\]
\end{theorem}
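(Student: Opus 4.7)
The plan is to establish the three claims essentially independently: the first by a direct monomial-basis computation in $S_\Gamma$, the second as a standard consequence of Koszul duality (granted by Theorem~\ref{thm:coh}), and the third from the graph-product structure of $A_\Gamma$ together with the first two identities.

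For the Poincar\'e series of $S_\Gamma$, I would fix a total order on $V$ and show by a diamond-lemma argument, using the rewriting rules
\begin{equation*}
v^2\to 0,\qquad v_jv_i\to -v_iv_j\text{ for }i<j\text{ with }\{v_i,v_j\}\in E,\qquad v_jv_i\to 0\text{ for }\{v_i,v_j\}\notin E,
\end{equation*}
that a $\Bbbk$-basis of $S_\Gamma$ consists of the ordered monomials $v_{i_1}\cdots v_{i_n}$ whose support forms a clique in $\Gamma$. Counting these by degree yields $\dim_\Bbbk(S_\Gamma)_n=c_n(\Gamma)$ at once.

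For the identity $\Phi_{R_\Gamma}(t)\Phi_{S_\Gamma}(-t)=1$, by Theorem~\ref{thm:coh} the algebra $R_\Gamma$ is Koszul with Koszul dual $S_\Gamma$, so the Koszul complex
\begin{equation*}
\cdots\to R_\Gamma\otimes_\Bbbk (S_\Gamma)_n^*\to\cdots\to R_\Gamma\otimes_\Bbbk (S_\Gamma)_1^*\to R_\Gamma\to\Bbbk\to 0
\end{equation*}
is a minimal graded free resolution of the trivial module $\Bbbk$; computing Euler characteristics in each graded degree delivers the claimed identity of power series.

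For the growth series, I would exploit the fact that $A_\Gamma$ is the graph product over $\Gamma$ of the cyclic groups $\langle v\rangle\cong\integers$. Any graph product $G_\Gamma$ of groups $(G_v)_{v\in V}$ satisfies a clique-sum identity
\begin{equation*}
\Phi_{G_\Gamma}(t)^{-1}=\sum_{C\subseteq V\text{ a clique}}\prod_{v\in C}\bigl(\Phi_{G_v}(t)^{-1}-1\bigr),
\end{equation*}
which is proved by stratifying geodesic normal forms according to the maximal clique of ``starting letters'' and applying M\"obius inversion on the clique lattice. For $G_v=\integers$ we have $\Phi_{\integers}(t)^{-1}-1=-2t/(1+t)$, so the right-hand side equals $\Phi_{S_\Gamma}(-2t/(1+t))$ by the first claim, and substituting $s=2t/(1+t)$ in the second claim then gives $\Phi_{A_\Gamma}(t)=\Phi_{R_\Gamma}(2t/(1+t))$.

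The main obstacle is the graph-product growth formula: it is not formal from the material already developed in the paper, and a self-contained proof requires a careful treatment of geodesic representatives in $A_\Gamma$. I expect to give the argument either via a canonical geodesic normal form (so that each element of $A_\Gamma$ is represented by a unique alternating stack of clique-indexed syllables, and the generating series factorises across these stacks) or by an induction on $|V|$ using the amalgamated-product splitting $A_\Gamma=A_{\Gamma\setminus\{v\}}*_{A_{\mathrm{link}(v)}}A_{\mathrm{link}(v)\cup\{v\}}$ together with the standard growth formula for an amalgamated product.
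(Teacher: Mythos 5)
Your treatment of the first two claims matches the paper's: the clique-monomial basis of $S_\Gamma$ (the paper extracts it from the cellular computation of $H^*(X_\Gamma;\Bbbk)$ in Section~\ref{ss:coh} rather than from a rewriting system, but it is the same basis), and the Euler-characteristic of the Koszul resolution for $\Phi_{R_\Gamma}(t)\Phi_{S_\Gamma}(-t)=1$, which is exactly the citation to Loday--Vallette. Where you genuinely diverge is the growth series. The paper does \emph{not} go through the graph-product growth formula at all: it uses Proposition~\ref{prop:normal_form} (uniqueness of $\Gamma$-reduced words up to the commutation move (M3)) to index simultaneously the elements of $A_\Gamma$ and the monomial basis of $R_\Gamma$ by (M3)-classes of support sequences $(v_1,\dots,v_n)$; a class of length $n$ contributes $(t/(1-t))^n$ to $\Phi_{R_\Gamma}$ (exponents in $\integers_{>0}$) and $(2t/(1-t))^n$ to $\Phi_{A_\Gamma}$ (exponents in $\integers\setminus\{0\}$), and the substitution $t\mapsto 2t/(1+t)$ matches the two term by term. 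This is shorter, needs nothing beyond the normal form already proved, and makes the third claim independent of the Koszul duality, whereas your route derives it from the second claim.

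The cost of your route is the clique-sum identity $\Phi_{G_\Gamma}(t)^{-1}=\sum_{C}\prod_{v\in C}\bigl(\Phi_{G_v}(t)^{-1}-1\bigr)$ for graph products, which you correctly identify as the missing piece. The identity is true (it is Chiswell's theorem on growth of graph products), and your specialisation $\Phi_{\integers}(t)^{-1}-1=-2t/(1+t)$ and the subsequent algebra are correct; but proving the identity in the stated generality, with arbitrary vertex groups, is at least as much work as the theorem you are trying to prove, and neither of your two sketches (M\"obius inversion over clique-indexed syllable stacks, or induction via the amalgam $A_{\Gamma\setminus\{v\}}*_{A_{\mathrm{link}(v)}}A_{\mathrm{link}(v)\cup\{v\}}$, which additionally needs the subgroups to be isometrically embedded for the amalgam growth formula to apply) is carried out. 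If you restrict to $G_v=\integers$ and prove the identity directly from the (M3)-normal form, you will find you have essentially reconstructed the paper's argument. One small slip in the first part: your rewriting system should also send the \emph{ordered} product $v_iv_j$ to $0$ when $\{v_i,v_j\}\notin E$, since the anticommutation rule as you state it only applies to edges.
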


In our next result, originally appearing
in~\cite{KapovichMillson:1998}*{Theorem~16.10}, we determine the
Malcev completion of $A_\Gamma$. We refer
to~\cites{Malcev:1949,Quillen:1969} and the more
recent~\cite{PapadimaSuciu:2004} for a review of this construction.

\begin{theorem}[Malcev completions]\label{thm:malcev}
  Assume $\Bbbk=\rationals$. There is then an isomorphism
  $\mu_{\exp}\colon\overline{R_\Gamma}\to\overline{\rationals
    A_\Gamma}$ of filtered, complete Hopf algebras; via this isomorphism,
  $\overline{L_\Gamma}$ is the Malcev Lie algebra of $A_\Gamma$, and
  the Malcev completion of $A_\Gamma$ is given on generators by
  \[A_\Gamma\to\exp(L_\Gamma)\subset\overline{R_\Gamma}\text{ via the classical power series }v\mapsto\sum_{n\ge0} \frac{v^n}{n!}\;\;\forall v\in V.\]
\end{theorem}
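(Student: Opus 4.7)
The plan is to construct $\mu_{\exp}$ directly, show it is a Hopf isomorphism by comparing with the associative isomorphism of Theorem~\ref{thm:cs2}, and then read off the Malcev data as the primitive and group-like elements on the $\overline{R_\Gamma}$ side. First I would check that $v\mapsto\exp(v)\coloneqq\sum_{n\ge0}v^n/n!$ defines a group homomorphism $A_\Gamma\to\overline{R_\Gamma}^\times$: whenever $\{v,w\}\in E$, the elements $v,w$ commute in $R_\Gamma$, so $\exp(v)\exp(w)=\exp(v+w)=\exp(w)\exp(v)$, respecting the defining relations of $A_\Gamma$. Extending $\rationals$-linearly produces an algebra map $\rationals A_\Gamma\to\overline{R_\Gamma}$, and since $\exp(v)-1\in\varpi$ this map sends $\varpi(A_\Gamma)^n$ into $\varpi^n$ and extends continuously to $\mu_{\exp}\colon\overline{\rationals A_\Gamma}\to\overline{R_\Gamma}$.

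Next I would show $\mu_{\exp}$ is a Hopf isomorphism. On associated gradeds it sends the class of $v-1$ to the class of $v$, the same effect as the associated graded of the filtered isomorphism $\mu$ of Theorem~\ref{thm:cs2}; combining this with the identification of $\gr\varpi$ in Theorem~\ref{thm:dim} and the completeness of both sides forces $\mu_{\exp}$ itself to be a filtered isomorphism. For Hopf compatibility, note that each $v\in V\subset A_\Gamma$ is group-like in $\rationals A_\Gamma$, whereas in the Hopf structure on $\overline{R_\Gamma}$ coming from Theorem~\ref{thm:lie1} each $v\in V$ is primitive; in characteristic zero the exponential of a primitive element is group-like, so $\Delta(\exp(v))=\exp(v)\otimes\exp(v)$, which is precisely $\mu_{\exp}^{\otimes 2}$ applied to $\Delta(v)=v\otimes v$. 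Because $\mu_{\exp}$ intertwines coproducts on a generating set, it does so globally.

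Finally I would read off the Malcev data. Since $A_\Gamma$ is residually torsion-free nilpotent (Theorem~\ref{thm:cs1}), Quillen's construction identifies its Malcev Lie algebra with $\operatorname{Prim}(\overline{\rationals A_\Gamma})$ and its Malcev completion with the subgroup of group-like elements; transporting through $\mu_{\exp}$ gives $\operatorname{Prim}(\overline{R_\Gamma})$ and its exponential image, respectively. Because $R_\Gamma$ is a connected graded Hopf algebra with finite-dimensional homogeneous pieces (Theorem~\ref{thm:growth}) and a graded coproduct, primitives of $\overline{R_\Gamma}$ decompose degreewise and coincide with the completion $\overline{L_\Gamma}$ of $L_\Gamma=\operatorname{Prim}(R_\Gamma)$ (Theorem~\ref{thm:lie1}); the group-likes are then exactly $\exp(\overline{L_\Gamma})$, and by construction each generator $v\in V$ of $A_\Gamma$ lands at $\exp(v)=\sum_{n\ge0}v^n/n!$. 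The one step needing care is the identification $\operatorname{Prim}(\overline{R_\Gamma})=\overline{L_\Gamma}$, extending the graded statement of Theorem~\ref{thm:lie1} to the completion; I expect this to be the main obstacle, but it follows cleanly from a bidegree-by-bidegree analysis of the graded coproduct on $R_\Gamma$.
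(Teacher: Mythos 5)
Your proposal is correct and follows essentially the same route as the paper: construct $\mu_{\exp}$ on generators by the exponential series, reuse the associated-graded/completeness argument from the Magnus map to get a filtered isomorphism, verify the coalgebra compatibility via the computation $\Delta(\exp v)=\exp v\otimes\exp v$ for primitive $v$, and then read off the Malcev Lie algebra and completion from Quillen's description as primitives and group-likes transported through the isomorphism. The one step you flag as delicate, $\operatorname{Prim}(\overline{R_\Gamma})=\overline{L_\Gamma}$, is handled correctly by your degreewise argument (the paper passes over it silently), so there is no gap.
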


We also show the following related result on formality in the sense of
rational homotopy theory; see Section~\ref{ss:malcev} for a review of the notion. 

\begin{theorem}[Formality]
  The classifying space $X_\Gamma$ of $A_\Gamma$ of \eqref{eq:classifying_space}is formal.
\end{theorem}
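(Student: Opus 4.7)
The plan is to exhibit $(S_\Gamma,0)$—the cohomology algebra equipped with the zero differential—as a commutative differential graded algebra (CDGA) model for $X_\Gamma$ over $\rationals$; this is precisely the content of formality in the sense of Sullivan. Since Theorem~\ref{thm:coh} already identifies $H^*(X_\Gamma;\rationals)$ with $S_\Gamma$, the task reduces to constructing a CDGA morphism $\phi\colon (S_\Gamma,0) \to A(X_\Gamma)$ into some Sullivan--de Rham algebra of $X_\Gamma$ which induces the identity on cohomology.

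First I would construct candidate closed $1$-forms. For each $v\in V$, let $\alpha_v$ denote the pullback to $X_\Gamma$ of the standard invariant $1$-form $d\theta_v$ on the $v$-th circle factor of $(S^1)^V$. Each $\alpha_v$ is closed, and the classes $[\alpha_v]$ are the generators of $H^1(X_\Gamma;\rationals)$ matching the degree-one part of $S_\Gamma$. The decisive geometric observation is the on-the-nose identity
\[\alpha_v\wedge\alpha_w = 0 \quad\text{whenever } \{v,w\}\notin E.\]
From the description~\eqref{eq:classifying_space}, every cell of $X_\Gamma$ is contained in some sub-torus $(S^1)^C$ with $C$ a clique, and the restriction of $\alpha_v$ to $(S^1)^C$ is zero unless $v\in C$. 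If $\{v,w\}\notin E$ then no clique contains both $v$ and $w$, so on every cell at least one of $\alpha_v,\alpha_w$ vanishes identically, and the product vanishes globally.

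The universal property of the exterior algebra then produces a CDGA morphism $\bigwedge^*(\rationals^V)\to A(X_\Gamma)$ sending each $v$ to $\alpha_v$, and the vanishing above shows that it factors through the quotient $\bigwedge^*(\rationals^V)\onto S_\Gamma$, yielding the desired $\phi$. Postcomposing with $A(X_\Gamma)\to H^*(X_\Gamma;\rationals)$ sends the generator $v\in S_\Gamma$ to the class $[\alpha_v]$ and is therefore, after Theorem~\ref{thm:coh}, the identity of $S_\Gamma$. Consequently $\phi$ is a quasi-isomorphism and $X_\Gamma$ is formal.

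The main technical obstacle is making sense of $A(X_\Gamma)$ as a genuine CDGA in which the pullback forms $\alpha_v$ literally live and satisfy the product identity above. Depending on taste, one can either triangulate $(S^1)^V$ compatibly with the subcomplex $X_\Gamma$ and work with Sullivan's piecewise-polynomial forms, or assemble a CDGA as the limit of the smooth de Rham algebras of the sub-tori $(S^1)^C$ over the poset of cliques. The Koszul-algebraic structure from Theorem~\ref{thm:coh} does not enter the argument explicitly, but it is implicitly why such a direct ``cell-wise vanishing'' argument succeeds: the only relations in $H^*(X_\Gamma)$ are the quadratic ones $\alpha_v\wedge\alpha_w$ for non-edges, and those are already visible at the level of forms on the ambient torus.
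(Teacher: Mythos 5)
Your proposal is correct and is essentially the paper's own argument: the paper likewise restricts the invariant one-forms $dx_v$ from the ambient torus to $X_\Gamma$ and shows that the sub-cdga they generate, with trivial differential, computes $H^*(X_\Gamma)$ and includes quasi-isomorphically into the algebra of restricted smooth forms. Your explicit check that $\alpha_v\wedge\alpha_w=0$ for non-edges merely spells out why that sub-cdga is a copy of $S_\Gamma$, which the paper leaves implicit.
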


\subsection{Examples and illustrations}\label{ss:examples}

Let us consider, as sketched in the Introduction, the two extreme cases of graphs $\Gamma$, the complete and empty graphs.

If $\Gamma$ is the complete graph on $V$, then $A_\Gamma$ is free Abelian with basis $V$, and $R_\Gamma$ is a usual polynomial algebra in variables $V$. The standard Koszul complex is given by the exterior algebra $S_\Gamma=\bigwedge^*(V)$, and coincides with the cohomology ring of $A_\Gamma=\integers V$. The classifying space $X_\Gamma$ is the usual torus $(S^1)^V$. The exponent-$p$ central series satisfies $\lambda_{n,p}=p^{n-1}\integers V$, and the $p$-dimension series satisfies $\gamma_{n,p}=p^i\integers V$ whenever $p^{i-1}<n\le p^i$. The growth series are readily computed as
\[\Phi_{A_\Gamma}(t)=\bigg(\frac{1+t}{1-t}\bigg)^{\abs{V}},\quad\Phi_{R_\Gamma}(t)=\bigg(\frac{1}{1-t}\bigg)^{\abs{V}},\quad\Phi_{S_\Gamma}(t)=\big(1+t\big)^{\abs{V}}.\]

If, on the other hand, $\Gamma$ is the empty graph on $V$, then $A_\Gamma$ is
free with basis $V$, and $R_\Gamma$ is a polynomial algebra in non-commuting
variables $V$. The algebra $S_\Gamma$ is reduced to $\Bbbk\oplus\Bbbk V$ with
$V^2=0$, and coincides with the cohomology ring of $A_\Gamma$. The classifying
space $X_\Gamma$ is a wedge of $\abs{V}$ circles. The Lie algebras $L_\Gamma$
and the restricted Lie algebra $L_{\Gamma,p}$ are free. The growth series are
readily computed as 
\[\Phi_{A_\Gamma}(t)=\frac{1+t}{1-(2\abs{V}-1)t},\quad\Phi_{R_\Gamma}(t)=\frac{1}{1-\abs{V} t},\quad\Phi_{S_\Gamma}(t)=1+\abs{V} t.\]

These results can be seen as special cases of the following
constructions. If $\Gamma$ is the disjoint union of two graphs
$\Gamma_1\sqcup\Gamma_2$, then $A_\Gamma=A_{\Gamma_1}*A_{\Gamma_2}$ is a
free product of groups, and similarly $L_\Gamma$ and $R_\Gamma$ are free
products in their respective categories, and
$S_\Gamma=S_{\Gamma_1}\oplus S_{\Gamma_2}/(1\oplus0=0\oplus1)$. The
space $X_\Gamma$ is the wedge (one-point union) of $X_{\Gamma_1}$ and
$X_{\Gamma_2}$, and the growth series of $A_\Gamma,R_\Gamma,S_\Gamma$
may be easily be deduced from those of $A_{\Gamma_1}$, $A_{\Gamma_2}$,
etc.:
\begin{align*}
  1-\frac1{\Phi_{A_\Gamma}}&=\Big(1-\frac1{\Phi_{A_{\Gamma_1}}}\Big)+\Big(1-\frac1{\Phi_{A_{\Gamma_2}}}\Big),\\
  1-\frac1{\Phi_{R_\Gamma}}&=\Big(1-\frac1{\Phi_{R_{\Gamma_1}}}\Big)+\Big(1-\frac1{\Phi_{R_{\Gamma_2}}}\Big),\\
  1-\Phi_{S_\Gamma}&=\big(1-\Phi_{S_{\Gamma_1}}\big)+\big(1-\Phi_{A_{\Gamma_2}}\big).
\end{align*}

If $\Gamma$ is the join of two graphs $\Gamma_1$ and $\Gamma_2$, namely
the graph obtained from $\Gamma_1\sqcup\Gamma_2$ by adding all edges
between $\Gamma_1$ and $\Gamma_2$, then
$A_\Gamma=A_{\Gamma_1}\times A_{\Gamma_2}$ is a direct product, and
similarly $L_\Gamma=L_{\Gamma_1}\times L_{\Gamma_2}$ and
$R_\Gamma=R_{\Gamma_1}\otimes R_{\Gamma_2}$, while $S_\Gamma$ is
$S_{\Gamma_1}\otimes S_{\Gamma_2}$ qua $\Bbbk$-module, with product
$(a\otimes b)(c\otimes d)=(-1)^{\deg(b)\deg(c)}(a c\otimes b d)$. The
classifying space is $X_\Gamma=X_{\Gamma_1}\times X_{\Gamma_2}$, and
the growth series $\Phi_{A_\Gamma}$, $\Phi_{R_\Gamma}$ and
$\Phi_{S_\Gamma}$ behave multiplicatively:
\begin{xalignat*}{3}
  \Phi_{A_\Gamma}&=\Phi_{A_{\Gamma_1}}\cdot\Phi_{A_{\Gamma_2}},&
  \Phi_{R_\Gamma}&=\Phi_{R_{\Gamma_1}}\cdot\Phi_{R_{\Gamma_2}},&
  \Phi_{S_\Gamma}&=\Phi_{S_{\Gamma_1}}\cdot\Phi_{S_{\Gamma_2}}.
\end{xalignat*}

Finally, all the objects constructed are functorial, in the sense that graph morphisms induce maps between the corresponding objects: if $\Gamma,\Gamma'$ are graphs and $f\colon\Gamma\to\Gamma'$ is a map from the vertex set of $\Gamma$ to that of $\Gamma'$ sending edges of $\Gamma$ to edges of $\Gamma'$, then there is an induced group homomorphism $f_*\colon A_\Gamma\to A_{\Gamma'}$, ring homomorphism $R_\Gamma\to R_{\Gamma'}$ and $S_{\Gamma'}\to S_\Gamma$ (note the direction!), etc. Furthermore, if $f$ is injective and full (meaning that $\{f(v),f(w)\}$ is an edge in $\Gamma'$ precisely when $\{v,w\}$ is an edge in $\Gamma$) then the corresponding group and ring homomorphisms are injective.

\subsection{Structure of the article}
The article introduces and relies on quite a number of different
concepts (Hopf algebras, the Magnus map, \dots). These are introduced
one after the other in the following sections. In particular,
Section~\ref{ss:hopf} collects some basic information about
(restricted) Lie algebras and Hopf algebras which we use as technical
tools; we prove the first part of Theorem~\ref{thm:lie1} in it.

Section~\ref{ss:magnus} introduces the Magnus map, which embeds the
group $A_\Gamma$ into the units of the partially commuting power
series ring $\overline{R_\Gamma}$. We show that this map is compatible
with the central series filtrations (and dimension series
filtrations). The explicit knowledge of the structure of the power
series ring can be transferred to $A_\Gamma$ to give the desired
information about the latter. We also prove Theorem~\ref{thm:dim} in it.

We next introduce cohomological notions in Section~\ref{ss:coh}, and
use them to prove Theorem~\ref{thm:coh}.

We study central series in more depth in Section~\ref{ss:central}, and
prove there the first, easy part of Theorem~\ref{thm:cs1}. The second
part requires more knowledge on the Lie algebras $L_\Gamma$, which we
describe in Section~\ref{ss:lie}; we prove Theorems~\ref{thm:cs2}
and~\ref{thm:lie2} there. We also complete there the proof of
Theorem~\ref{thm:lie1} that pertains to Lie algebra cohomology.

Finally Section~\ref{ss:growth} proves Theorem~\ref{thm:growth} and
Section~\ref{ss:malcev} proves Theorem~\ref{thm:malcev}.  We apologize
to the reader if the proofs are not given in strictly linear order; we
found it preferable to prove individual statements of the main results
where the appropriate tools were introduced.


\section{Lie and Hopf algebras}\label{ss:hopf}
We first recall from~\cite{Jacobson:1941} that a restricted Lie
algebra over $\Bbbk$, in characteristic $p$, is a Lie algebra equipped with an
extra operation, written $x\mapsto x^{[p]}$, called the
\emph{$p$-mapping} and subject to the following axioms, where we use the
standard multi-commutator convention $[x,y,z]=[x,[y,z]]$, etc. For all $x,y$
in the Lie algebra and $\alpha\in\Bbbk$,
\begin{equation*}
[y,x^{[p]}]=[y,x,\dots,x]  \quad\text{($p$ factors `$x$')};\qquad(\alpha x)^{[p]}=\alpha^p x^{[p]};
\end{equation*}
\begin{equation*}
(x+y)^{[p]}=x^{[p]}+y^{[p]}+\sum_{i=1}^{p-1}s_i(x,y)
\end{equation*}
for the Lie
expressions $s_i(X,Y)$ defined by

\begin{equation*}
\frac{d}{dt} [X,t X+Y,\dots,t X+Y]=\sum s_i(X,Y) t^i \text{  with $p-1$ factors }
`t X+Y'.
\end{equation*}
For example, if $p=2$ then $s_1(X,Y)=[X,Y]$, and if $p=3$
then $s_1(X,Y)=[Y,X,Y]$ and $s_2(X,Y)=[X,Y,X]$. 

We adopt the convention that, in characteristic $0$, every Lie algebra is restricted with trivial $p$-mapping. This way, from now on we can uniformly work with restricted Lie algebras.

Recall that every restricted Lie algebra $L$ has a \emph{restricted
  universal enveloping algebra}, a unital associative algebra $U_p(L)$
equipped with a map of restricted Lie algebras $L\to U_p(L)$,
universal with respect to this property. The Lie bracket in $L$ is
identified with the commutator $[x,y]=x y-y x$, and the $p$-mapping in
$L$ is identified with the $p$-power operation in $U_p(L)$. The map $L\to
U_p(L)$ is injective.

Recall next that a Hopf algebra is an associative algebra $R$ equipped
with additional structure, in particular an \emph{augmentation}
$\varepsilon\colon R\to\Bbbk$ and a \emph{coproduct}
$\Delta\colon R\to R\otimes R$ which are algebra homomorphisms, and an
\emph{antipode} $S\colon R\to R$ which is an algebra antihomomorphism,
subject to some axioms that we shall not need;
see~\cite{Sweedler:1969}.

We will use the following classical facts, see~\cite{Serre:1964}*{Theorem~III.5.4 and Exercise~2}.
\begin{proposition}\label{prop:Hopf_background}
  The (restricted) universal enveloping algebra $U(L)$, respectively
  $U_p(L)$, is a Hopf algebra. The augmentation, coproduct and antipode are
  given by
  \begin{equation*}
    \varepsilon\colon U(L)\onto U(L)/\langle
    L\rangle\xrightarrow{\cong}\Bbbk;\qquad \Delta(x)=x\otimes1+1\otimes x;\qquad
    S(x)=-x \;\;\forall x\in L.
  \end{equation*}

In a Hopf algebra $H$, call $x\in H$ \emph{primitive} if
  $\Delta(x)=x\otimes1+1\otimes x$; the primitive elements of $H$ form a Lie
  subalgebra $P$ of $H$. If the ring $\Bbbk$ is a $\integers$-free module,
  then the primitive elements in $U(L)$ coincide with $L$, while if $L$ is
  restricted and $\Bbbk$ is $p$-torsion then the primitive elements in
  $U_p(L)$ coincide with $L$;
\end{proposition}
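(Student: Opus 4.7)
The plan is twofold: first build the Hopf-algebra structure on $U(L)$ and $U_p(L)$ using their universal properties, and second identify the primitive subspace $P$ via a Poincar\'e--Birkhoff--Witt (PBW) computation. For the first step, I observe that on $U(L)\otimes U(L)$, equipped with its commutator Lie bracket, the assignment $x\mapsto x\otimes 1+1\otimes x$ is a Lie algebra homomorphism out of $L$: the cross terms in $[x\otimes 1+1\otimes x,\,y\otimes 1+1\otimes y]$ cancel because $x\otimes 1$ commutes with $1\otimes y$. In the restricted case one checks additionally that $(x\otimes 1+1\otimes x)^p=x^p\otimes 1+1\otimes x^p$, using the commutativity of $x\otimes 1$ with $1\otimes x$ together with $\binom{p}{i}\equiv 0\pmod p$ for $0<i<p$. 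By the universal property this assignment then extends to a (restricted) algebra homomorphism $\Delta\colon U(L)\to U(L)\otimes U(L)$, and analogously for $U_p(L)$. The augmentation $\varepsilon$ extends the zero map $L\to\Bbbk$, and the antipode $S$ is built from the (restricted) Lie antihomomorphism $x\mapsto -x$. The Hopf axioms (coassociativity, counit, antipode) are algebra identities and hence need only be checked on the generating set $L$, where they are immediate.

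For the second step, if $x,y$ are primitive then multiplicativity of $\Delta$ gives $\Delta([x,y])=[x,y]\otimes 1+1\otimes[x,y]$ and, in the restricted case, $\Delta(x^{[p]})=x^{[p]}\otimes 1+1\otimes x^{[p]}$ by the same $p$-th power computation as above; so $P$ is a (restricted) Lie subalgebra containing the image of $L$. For the reverse inclusion $P\subseteq L$, I would fix a totally ordered $\Bbbk$-basis $(e_i)_{i\in I}$ of $L$ and invoke PBW: the ordered monomials $e_{i_1}^{a_1}\cdots e_{i_k}^{a_k}$ with $i_1<\cdots<i_k$ and $a_j\in\naturals$ (resp.\ $0\le a_j<p$ in the restricted case) form a $\Bbbk$-basis of $U(L)$ (resp.\ $U_p(L)$). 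Computing $\Delta$ on such a monomial by multiplicativity, using the expansion $\Delta(e_i^a)=\sum_{b+c=a}\binom{a}{b}e_i^b\otimes e_i^c$, and reading off coefficients on the induced tensor PBW basis of $U(L)\otimes U(L)$, a length induction forces the coefficient of any ordered monomial of length $\ge 2$, and of any pure power $e_i^a$ with $a\ge 2$ (resp.\ $2\le a<p$), to vanish on a primitive element. What survives is a linear combination of the $e_i$, that is, an element of $L$.

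The main obstacle, and the step where the hypotheses on $\Bbbk$ enter in an essential way, is this coefficient-matching argument. In the unrestricted case it relies on the binomial coefficients $\binom{a}{b}$ being non-zero-divisors on $\Bbbk$, which holds precisely because $\Bbbk$ is $\integers$-free; this hypothesis also underwrites the validity of the PBW decomposition itself. In the restricted $\finfield$-algebra case the bound $a<p$ keeps every $\binom{a}{b}$ with $0<b<a$ invertible modulo $p$, and the $p$-mapping on $L$ is exactly what identifies the otherwise ``extra'' primitive $x^p\in U_p(L)$ with an element of $L$, salvaging the equality $P=L$. The remaining verifications are formal manipulations on generators.
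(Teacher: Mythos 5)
Your proof is correct and follows exactly the route the paper intends: the paper does not prove Proposition~2.1 but cites it as classical (Serre, \emph{Lie Algebras and Lie Groups}, Theorem~III.5.4 and Exercise~2), and the argument there is the one you give --- extend $\varepsilon$, $\Delta$, $S$ from $L$ by the universal property of $U(L)$ (resp.\ $U_p(L)$), then pin down the primitives by coefficient-matching against a Poincar\'e--Birkhoff--Witt basis, with $\integers$-freeness of $\Bbbk$ (resp.\ the bound $a<p$ and the $p$-mapping absorbing $x^p$ into $L$) making the relevant binomial coefficients non-zero-divisors. The one point worth flagging is that your PBW step tacitly assumes $L$ is free as a $\Bbbk$-module, which is genuinely needed over an arbitrary commutative ring but is satisfied by the algebras $L_\Gamma$ and $L_{\Gamma,p}$ to which the paper applies the proposition.
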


  If a (restricted) Lie algebra over $\Bbbk$ is given by a (restricted) Lie
  algebra presentation, then by the universal property the same presentation,
  now as a presentation of algebras over $\Bbbk$, defines its (restricted)
  universal enveloping algebra. In particular, $R_\Gamma$ is the (restricted)
  enveloping algebra of $L_\Gamma$ or $L_{\Gamma,p}$, respectively.

\begin{proof}[Proof of Theorem~\ref{thm:lie1}]
 As a universal enveloping
  algebra,    $R_\Gamma=U(L_\Gamma)$ is by Proposition
  \ref{prop:Hopf_background}  a Hopf algebra 
  (this also appears in~\cite{Schmitt}), and  its Lie subalgebra of
  primitive elements $P$ is equal to $L_\Gamma$ or $L_{\Gamma,p}$,
  when considered as subset of $R_\Gamma$ in the obvious way.
\end{proof}

We note for later use the following standard constructions,
see also~\cite{Quillen:1968}.
\begin{proposition}\label{prop:Hopf_group}
If $G$ is a group then the group ring $\Bbbk G$ is a Hopf algebra with
augmentation, coproduct and antipode given as follows:
\begin{equation*}
\varepsilon\colon\Bbbk G\to\Bbbk\text{ induced by the
  map }G\to 1;\qquad \Delta(g)=g\otimes g; \qquad S(g)=g^{-1}\;\forall g\in
G.
\end{equation*}

  Furthermore, if $H$ is a Hopf algebra and $\varpi$ denotes its augmentation
  ideal $\ker(\varepsilon)$, then $\bigoplus_{n\ge0}\varpi^n/\varpi^{n+1}$ is
  naturally a graded Hopf algebra.
\end{proposition}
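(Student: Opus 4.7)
The plan is to verify the Hopf algebra axioms for $\Bbbk G$ directly on group elements, and then show for the second assertion that the augmentation ideal filtration is compatible with all structure maps so that it descends to a graded Hopf structure.

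For the first assertion, I would define $\varepsilon$, $\Delta$, and $S$ on the $\Bbbk$-basis $G$ of $\Bbbk G$ by the stated formulas and extend $\Bbbk$-linearly. Each map is then an algebra morphism (or, for $S$, an antimorphism) by verifying compatibility on pairs of basis elements: for $g,h\in G$,
\[
\Delta(gh)=gh\otimes gh=(g\otimes g)(h\otimes h)=\Delta(g)\Delta(h),
\]
which uses only that the product in $\Bbbk G\otimes\Bbbk G$ is componentwise and that $G$ is closed under multiplication. The coassociativity, counit, and antipode axioms likewise reduce to identities on $g\in G$ and follow directly from the group axioms; in particular $\mu\circ(S\otimes\id)\circ\Delta(g)=g^{-1}g=1=\varepsilon(g)\cdot 1$ uses only the existence of inverses.

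For the second assertion, the strategy is to show that the augmentation-ideal filtration $\{\varpi^n\}_{n\ge 0}$ on $H$ is compatible with each structure map. Multiplication satisfies $\varpi^m\cdot\varpi^n\subseteq\varpi^{m+n}$ by definition of an ideal power, and $\varepsilon$ kills $\varpi$. The crucial step, and the main obstacle, is to prove
\[
\Delta(\varpi^n)\subseteq\sum_{i+j=n}\varpi^i\otimes\varpi^j.
\]
I would first handle $n=1$ using the splitting $H=\Bbbk\cdot 1\oplus\varpi$ induced by $\varepsilon$: for $x\in\varpi$, decompose $\Delta(x)$ into its four components in $(\Bbbk\cdot 1\oplus\varpi)\otimes(\Bbbk\cdot 1\oplus\varpi)$ and apply the counit identities $(\varepsilon\otimes\id)\Delta(x)=x=(\id\otimes\varepsilon)\Delta(x)$ together with $\varepsilon(x)=0$ to pin them down, obtaining $\Delta(x)=x\otimes 1+1\otimes x+c$ with $c\in\varpi\otimes\varpi$. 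Since $\Delta$ is a ring homomorphism, multiplicativity then yields $\Delta(\varpi^n)\subseteq\Delta(\varpi)^n$, which lies in the claimed bidegree-$n$ piece of the tensor-product filtration.

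The antipode compatibility $S(\varpi^n)\subseteq\varpi^n$ follows from the Hopf axiom $\varepsilon\circ S=\varepsilon$, which gives $S(\varpi)\subseteq\varpi$, combined with antimultiplicativity of $S$. With all structure maps compatible with the filtration, each descends to $\gr H$, and the Hopf axioms, being polynomial identities among filtration-preserving maps, pass automatically to the associated graded. The one remaining technical subtlety to address is the identification of the associated graded of $H\otimes H$ (equipped with its tensor-product filtration) with $\gr H\otimes\gr H$; this holds whenever $\Bbbk$ is a field or the successive quotients $\varpi^n/\varpi^{n+1}$ are $\Bbbk$-flat, which is the case in every application of this proposition in the sequel.
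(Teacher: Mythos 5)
The paper does not actually prove this proposition: it is recorded as a ``standard construction'' with a pointer to Quillen's paper on the associated graded of a group ring, so the only meaningful comparison is with the standard argument --- which is exactly what you supply, correctly. Your verification of the Hopf axioms for $\Bbbk G$ on the basis $G$ is fine, and your key step for the second assertion, namely using the splitting $H=\Bbbk\cdot1\oplus\varpi$ and the counit axiom to show $\Delta(x)=x\otimes1+1\otimes x+c$ with $c\in\varpi\otimes\varpi$ for $x\in\varpi$, and then multiplicativity to get $\Delta(\varpi^n)\subseteq\sum_{i+j=n}\varpi^i\otimes\varpi^j$, is the right one. The only point that deserves more care is precisely the one you flag: identifying $\gr(H\otimes H)$, for the tensor-product filtration, with $\gr H\otimes\gr H$, which in general needs the subquotients $\varpi^n/\varpi^{n+1}$ to be $\Bbbk$-flat. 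Your remark that flatness ``is the case in every application in the sequel'' is slightly delicate for $H=\Bbbk A_\Gamma$: the freeness of $\varpi(A_\Gamma)^n/\varpi(A_\Gamma)^{n+1}$ is only established in Lemma~\ref{lem:dim}, whose proof invokes this very proposition, so as written there is a whiff of circularity. It can be dispelled for group rings without any flatness hypothesis: $\Delta$ is induced by the diagonal homomorphism $G\to G\times G$, the filtration $F^n$ on $\Bbbk G\otimes\Bbbk G=\Bbbk(G\times G)$ is the augmentation filtration of $\Bbbk(G\times G)$, and $\gr(\Bbbk G)$ is generated in degree one by the classes $[g-1]$, which are visibly primitive; alternatively one can simply add the flatness of the subquotients as a standing hypothesis in the second assertion, since that is how it is used. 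Either way the substance of your argument stands.
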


\section{The Magnus map}\label{ss:magnus}

\subsection{Filtrations and gradings}

We first recall that, since the relations of $R_\Gamma$ and $S_\Gamma$ are homogeneous, these rings are naturally graded by setting $deg(v)=1$ for all $v\in V$. We view $R_\Gamma$ as a ring of polynomials in partially commuting variables $v\in V$.

Let us consider the augmentation ideal $\varpi=\langle V\rangle$ in $R_\Gamma$. It consists of all polynomials without constant term. Note that $\varpi^n$ then consists of all polynomials with no terms of degree $<n$. We define a topology on $R_\Gamma$ by declaring the sets $\varpi^n$ to form a basis of neighbourhoods of $0$, and let $\overline{R_\Gamma}$ be the completion of $R_\Gamma$ in this topology. We thus have
\[R_\Gamma\cong\bigoplus_{n\ge0}\varpi^n/\varpi^{n+1},\qquad\overline{R_\Gamma}\cong\prod_{n\ge0}\varpi^n/\varpi^{n+1}.\]

We write $\overline\varpi$ for the closure of $\varpi$ in $\overline{R_\Gamma}$. It consists of all power series with vanishing constant term, and similarly $\overline\varpi^n$ consists of the power series with no terms of degree $<n$.

For comparison, consider the group ring $\Bbbk A_\Gamma$, and let $\varpi(A_\Gamma)$ denote the augmentation ideal of $\Bbbk A_\Gamma$; it is the ideal 
\[\langle g-1\mid g\in A_\Gamma\rangle=\langle v-1\mid v\in V\rangle.\]
We topologize $\Bbbk A_\Gamma$ by declaring the $\varpi(A_\Gamma)^n$ to form a basis of neighbourhoods of the identity, and let $\overline{\Bbbk A_\Gamma}$ denote the corresponding completion. Moreover, let $\gr(\Bbbk A_\Gamma)\coloneqq\bigoplus_{n\ge0}\varpi(A_\Gamma)^n/\varpi(A_\Gamma)^{n+1}$ be the associated graded algebra. We isolate the main ingredient of Theorem~\ref{thm:dim}:
\begin{lemma}\label{lem:dim}
  We have $R_\Gamma\cong \gr(\Bbbk A_\Gamma)$ as graded algebras via the natural map 
  \[\alpha\colon R_\Gamma \to  \gr(\Bbbk A_\Gamma); v_j \mapsto [(v_j-1)]\text{ for }v_j\in V.\]
\end{lemma}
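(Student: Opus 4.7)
The plan is to verify that $\alpha$ is a well-defined graded algebra homomorphism, show surjectivity by inspection of the filtration, and then establish injectivity by producing a left inverse via the Magnus embedding.

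First I would check well-definedness: each $v-1$ lies in $\varpi(A_\Gamma)$, so $[(v-1)]$ has degree one in $\gr(\Bbbk A_\Gamma)$, and for every edge $\{v,w\}\in E$ the identity $vw=wv$ in $A_\Gamma$ gives
\[(v-1)(w-1)-(w-1)(v-1)=vw-wv=0\quad\text{in }\Bbbk A_\Gamma,\]
so the defining commutation relations of $R_\Gamma$ are respected and $\alpha$ extends uniquely to a graded $\Bbbk$-algebra homomorphism. Surjectivity then follows because $\varpi(A_\Gamma)$ is generated as a two-sided ideal by $\{v-1:v\in V\}$—any $g-1$ expands as a polynomial in the $(v^{\pm1}-1)$'s, using $v^{-1}-1=-v^{-1}(v-1)$—so $\varpi(A_\Gamma)^n/\varpi(A_\Gamma)^{n+1}$ is spanned by length-$n$ products of classes $[(v-1)]$, which are precisely the images of $\alpha$ on degree-$n$ monomials.

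For injectivity I would invoke the Magnus map $\mu\colon A_\Gamma\to\overline{R_\Gamma}^\times$ defined by $v\mapsto 1+v$. Each $1+v$ is a unit in $\overline{R_\Gamma}$ with inverse $\sum_{k\ge 0}(-v)^k$, and for $\{v,w\}\in E$ we have $(1+v)(1+w)=(1+w)(1+v)$ since $vw=wv$ in $R_\Gamma$, so the universal property of $A_\Gamma$ makes $\mu$ well-defined. Extending $\Bbbk$-linearly gives $\tilde\mu\colon\Bbbk A_\Gamma\to\overline{R_\Gamma}$ with $\tilde\mu(v-1)=v\in\overline\varpi$, so by multiplicativity $\tilde\mu(\varpi(A_\Gamma)^n)\subseteq\overline\varpi^n$. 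Since $R_\Gamma$ is already graded, $\gr(\overline{R_\Gamma})=R_\Gamma$ canonically, and $\tilde\mu$ descends to a graded algebra map $\bar\mu\colon\gr(\Bbbk A_\Gamma)\to R_\Gamma$. On generators $\bar\mu\circ\alpha(v)=\bar\mu([(v-1)])=v$, so $\bar\mu\circ\alpha=\mathrm{id}_{R_\Gamma}$; combined with surjectivity this forces $\alpha$ to be an isomorphism.

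The main obstacle is getting the Magnus map off the ground—checking that $v\mapsto 1+v$ respects the defining relations of $A_\Gamma$ inside the completion. This reduces to the observation that $R_\Gamma$ was defined with exactly the same commutation relations that the $A_\Gamma$-generators obey, so the verification is essentially tautological; the only genuine care needed is passing to $\overline{R_\Gamma}$ so that $(1+v)^{-1}$ exists as a power series. Once $\tilde\mu$ is in place, the rest is formal bookkeeping using the canonical identification $\gr(\overline{R_\Gamma})\cong R_\Gamma$.
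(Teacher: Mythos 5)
Your argument is correct, but it is not the route the paper takes for this lemma. The paper's proof is a Hopf-algebra shortcut: it observes that $\Bbbk A_\Gamma$, $\gr(\Bbbk A_\Gamma)$ and $R_\Gamma$ are cocommutative Hopf algebras, that $\alpha$ is a Hopf algebra map which is an isomorphism in degree $1$ (via $\varpi(A_\Gamma)/\varpi(A_\Gamma)^2\cong H_1(A_\Gamma;\Bbbk)\cong\Bbbk V$), and that both sides are generated in degree $1$; the Milnor--Moore theorems then force $\alpha$ to be an isomorphism, with no need to exhibit an inverse. You instead prove surjectivity by the elementary spanning argument (which the paper explicitly mentions and sets aside as the ``elementary considerations'' route) and injectivity by building a left inverse $\bar\mu$ from the filtration-preserving extension $\tilde\mu$ of the Magnus map, using the canonical identification $\gr(\overline{R_\Gamma})\cong R_\Gamma$. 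This is essentially the alternative proof attributed to Darn\'e in the remark following the lemma, and it is also how the paper later identifies $\gr(\mu)$ as the inverse of $\alpha$ in the proposition on the Magnus map --- there is no circularity in your use of $\mu$, since you only need its well-definedness (a tautological check of the commutation relations), not its injectivity. What each approach buys: the Milnor--Moore argument is shorter and avoids normal forms entirely, but imports nontrivial structure theory of cocommutative Hopf algebras; your argument is self-contained and elementary over an arbitrary commutative ring $\Bbbk$, at the cost of the slightly fiddly spanning step for surjectivity (stripping group-element coefficients from products $u_0(v_1-1)u_1\cdots(v_n-1)u_n$ modulo $\varpi(A_\Gamma)^{n+1}$), which you state a bit tersely but which is standard.
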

\begin{proof}
  The isomorphism between the degree-$n$ subspace of $R_\Gamma$ and
  $\varpi(A_\Gamma)^n/\varpi(A_\Gamma)^{n+1}$ can be proven by
  elementary considerations, since
  $\varpi(A_\Gamma)^n/\varpi(A_\Gamma)^{n+1}$ is generated by
  expressions $(v_1-1)\cdots(v_n-1)$.

  However, here is a somewhat more elegant
  shortcut: as 
  we noted in Propositions \ref{prop:Hopf_background} and
  \ref{prop:Hopf_group}, $\Bbbk A_\Gamma$, 
  $\gr(\Bbbk A_\Gamma)$, and $R_\Gamma$ are all cocommutative Hopf
  algebras, with coproduct induced respectively by
  $\Delta(g)=g\otimes g$, by
  $\Delta([g-1])=[(g-1)\otimes1+1\otimes(g-1)]$ for $g\in A_\Gamma$
  and by $\Delta(v)=v\otimes 1+1\otimes v$ for $v\in V$.

  The map $\alpha\colon R_\Gamma\to \gr(\Bbbk A_\Gamma)$  is a well defined map
  of  unital graded algebras because the defining commutation property for the
  $v_j$   in $R_\Gamma$ is satisfied for their images, and all these elements
  are of degree $1$. Moreover, we see that this map is a map of Hopf algebras.
  
 Finally, $\alpha$ is an isomorphism when restricted
  to the   degree $1$ subspaces, since
  \begin{equation*}
\varpi/\varpi^2 \cong\Bbbk
  V  \cong(A_\Gamma/[A_\Gamma,A_\Gamma])\otimes\Bbbk \cong \varpi(A_\Gamma)/\varpi(A_\Gamma)^2.
\end{equation*}
Here, the last isomorphism is the standard
  isomorphism of the first group homology
  $H^1(A_\Gamma;\Bbbk)=A_\Gamma/[A_\Gamma,A_\Gamma]\otimes \Bbbk$ as
  $\varpi(A_\Gamma)/\varpi(A_\Gamma)^2$. We conclude
  by~\cite{MilnorMoore:1965}*{Theorems~5.18 and~6.11} that $\alpha$ is
  an isomorphism: it is a map between cocommutative Hopf algebras both
  generated as algebras in degree $1$ and the map
  is an isomorphism in degree $1$. This shortcut already appears
  in~\cite{Quillen:1968}.
\end{proof}

\begin{remark}
  An alternative proof of Lemma~\ref{lem:dim} was kindly suggested to
  us by Jacques Darn\'e: there are natural maps
  \begin{equation*}
  A_\Gamma\supset V\to\overline{R_\Gamma}\quad\text{ and }\quad R_\Gamma\supset V\to\overline{\Bbbk A_\Gamma},
\end{equation*}
which induce
  isomorphisms
  $\overline{\Bbbk A_\Gamma}\leftrightarrow\overline{R_\Gamma}$ by
  universal properties. Since $\gr R_\Gamma\cong R_\Gamma$, the result
  (and the last statement of Proposition~\ref{prop:Magnus}) follow.
\end{remark}

\begin{proof}[Proof of Theorem~\ref{thm:dim}]
  Lemma~\ref{lem:dim} gives an isomorphism between
  $\varpi(A_\Gamma)^n/\varpi(A_\Gamma)^{n+1}$ and the degree-$n$ part
  of $R_\Gamma$. Since $R_\Gamma$ is graded and not only filtered, its
  degree-$n$ part is   $\varpi^n/\varpi^{n+1}$, so we get the desired
  isomorphism $\varpi^n/\varpi^{n+1}\cong
  \varpi(A_\Gamma)^n/\varpi(A_\Gamma)^{n+1}$ for each $n\in\naturals$. 
\end{proof}

\subsection{The Magnus map}

We turn to the fundamental tool we use in relating the group $A_\Gamma$ with the algebra $R_\Gamma$: it is the ``Magnus map''
\begin{equation}\label{eq:Magnus_map}
  \mu\colon\left\{\begin{array}{r@{\;}l}
    A_\Gamma &\to 1+\overline\varpi\subseteq\overline{R_\Gamma}^\times\subseteq\overline{R_\Gamma},\\
    v &\mapsto 1+v\text{ for }v\in V.
  \end{array}\right.
\end{equation}
Here, $\overline{R_\Gamma}^\times$ is the group of multiplicative units of
$\overline{R_\Gamma}$. We have to map to the completion because we have to map
$v^{-1}$ to $\mu(v)^{-1}=1-v+v^2-v^3+\cdots$ which is an infinite sum. It is immediate that the commutation relations between the $v\in V$ defining $A_\Gamma$ also hold between the $\mu(v)$, therefore $\mu$ is well defined.

It is easy to describe quite explicitly a basis of the polynomial ring in partially commuting variables $R_\Gamma$. This comes
hand-in-hand with a kind of normal form for elements of $A_\Gamma$:

\begin{definition}\label{def:moves}
  A word $v_1^{e_1}\cdots v_n^{e_n}$ with $v_i\in V$ and $e^i\in\integers$ is
  called $\Gamma$-\emph{reduced} if the number $n$ of factors $v_i^{e_i}$ cannot be reduced by application of any sequence of moves which are either
  \begin{enumerate}
  \item[(M1)] remove $v_{i}^0$,
  \item[(M2)] replace the piece $v_i^{e_i}v_{i+1}^{e_{i+1}}$ by $v_i^{e_i+e_{i+1}}$ (if $v_i=v_{i+1}$), or
  \item[(M3)] replace $v_i^{e_i}v_{i+1}^{e_{i+1}}$ by $v_{i+1}^{e_{i+1}}v_i^{e_i}$ (if $\{v_i,v_{i+1}\}\in E$).
  \end{enumerate}
  Note that none of these moves increases the number of factors.
\end{definition}

\noindent We then immediately get the
\begin{lemma}
  The set of (M3)-equivalence classes of $\Gamma$-reduced words is a
  basis of $R_\Gamma$; more precisely, any set of representatives of
  (M3)-equivalence classes of reduced words of length $n$ forms a
  basis of the degree-$n$ component of $R_\Gamma$.\qed
\end{lemma}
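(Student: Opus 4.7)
The plan is to identify $R_\Gamma$ with $\Bbbk\langle V\rangle/I$, where $\Bbbk\langle V\rangle$ is the free associative $\Bbbk$-algebra on $V$ and $I$ is the two-sided ideal generated by $\{vw-wv : \{v,w\}\in E\}$, and then to verify directly that M3-equivalence classes of $\Gamma$-reduced words descend to a $\Bbbk$-basis of this quotient.

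For the spanning half, I would observe that every element of $R_\Gamma$ is a $\Bbbk$-linear combination of monomial words $v_{i_1}\cdots v_{i_n}$, and that each of the moves M1, M2, M3 applied to such a word preserves its value in $R_\Gamma$ (M1 and M2 tautologically, M3 because it applies a defining relation). As noted in Definition~\ref{def:moves}, no move increases the factor count while M1 and M2 strictly decrease it, so iterated application terminates in a $\Gamma$-reduced word. Two M3-equivalent reduced words are equal in $R_\Gamma$, so any transversal of the M3-classes of reduced words spans $R_\Gamma$; restricting to representatives of total degree $n$ spans $\varpi^n/\varpi^{n+1}$.

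For linear independence, the key step will be the identification of $I$, as a $\Bbbk$-submodule of $\Bbbk\langle V\rangle$, with the $\Bbbk$-span of all differences $\alpha-\beta$ where $\alpha,\beta$ are words related by a single M3-move. This holds because $I$, generated as a two-sided ideal by the degree-$2$ relators $vw-wv$, is $\Bbbk$-spanned by expressions $a(vw-wv)b=avwb-awvb$ for words $a,b$ in $V$ and $\{v,w\}\in E$, and each such expression is exactly the difference of an M3-adjacent pair; transitivity then extends the description to all pairs $\alpha\sim_{M3}\beta$. Since the set of all words in $V$ is a $\Bbbk$-basis of $\Bbbk\langle V\rangle$, it follows that $R_\Gamma$ has the set of M3-equivalence classes of words as a $\Bbbk$-basis. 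Every class contains a reduced representative by the spanning argument, so reduced representatives suffice; grading by total degree gives the per-degree statement.

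The main obstacle is exactly this identification of $I$ as the span of M3-differences, a routine but non-vacuous observation about two-sided ideals generated by homogeneous degree-$2$ relators in a free associative algebra. Once it is stated carefully, the remainder of the proof is bookkeeping, making the lemma essentially a compatibility check between the algebraic presentation of $R_\Gamma$ and its combinatorial description as the monoid algebra of the free partially commutative monoid on $(V,E)$.
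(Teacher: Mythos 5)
Your argument is correct and matches what the paper intends: the lemma is stated there with no proof at all (it is introduced with ``we then immediately get'' and closed by \verb|\qed|), the point being exactly the identification of $R_\Gamma$ with the monoid algebra of the free partially commutative monoid on $(V,E)$. Your two steps --- spanning via termination of the moves, and the observation that the relation ideal $I\subset\Bbbk\langle V\rangle$ is $\Bbbk$-spanned by the differences $a v w b - a w v b$ of single-(M3)-move-adjacent words, so that the quotient is free on (M3)-classes --- are precisely the bookkeeping the paper leaves implicit, with the only remaining (equally routine) check being that distinct (M3)-classes of reduced words represent distinct elements of the partially commutative monoid.
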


In case $\Bbbk=\integers$, or more generally if $\Bbbk$ has characteristic
$0$, it is known that the Magnus map $\mu$ is injective,
see~\cite{Wade}*{Corollary~4.8}. We adapt this argument to $\Bbbk$ of non-zero
characteristic, arriving at some of the original results of this note:
\begin{lemma}\label{lem:wade}
  Let $\Bbbk$ be a ring of characteristic $p>0$.

  Consider $g\in A_\Gamma$. There exists a maximal $k\in\naturals$, and
  minimal $s_1,\dots,s_k\in\naturals$, such that there is a $\Gamma$-reduced monomial $m=w_1^{p^{s_1}}\cdots w_k^{p^{s_k}}$ with non-zero coefficient in $\mu(g)$. This monomial is unique. Furthermore, if $v_1^{e_1}\cdots v_n^{e_n}$ is a reduced representative of $g$ then $n=k$ and $v_1\cdots v_n=w_1\cdots w_k$ and $p^{s_i}|e_i$ and the coefficient of $m$ in $\mu(g)$ is $(e_1 p^{-s_1})\cdots(e_n p^{-s_n})$.
\end{lemma}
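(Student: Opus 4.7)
The plan is to expand $\mu(g)$ explicitly using the characteristic-$p$ Frobenius identity and read off the coefficients directly. I would first fix a reduced representative $v_1^{e_1}\cdots v_n^{e_n}$ of $g$ and, for each $i$, write $e_i=p^{s_i}u_i$ with $s_i\in\naturals$ taken maximal so that $\gcd(u_i,p)=1$ (allowing $u_i<0$ when $e_i<0$). Since $v_i$ commutes with $1$, the identity $(1+v_i)^{p^{s_i}}=1+v_i^{p^{s_i}}$ holds in characteristic $p$, so that in $\overline{R_\Gamma}$
\[(1+v_i)^{e_i}=(1+v_i^{p^{s_i}})^{u_i}=\sum_{j\ge0}\binom{u_i}{j}v_i^{jp^{s_i}},\]
with the generalized binomial interpretation for negative $u_i$. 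Multiplying these expansions out gives
\[\mu(g)=\sum_{(j_1,\dots,j_n)\in\naturals^n}\Big(\prod_{i=1}^n\binom{u_i}{j_i}\Big)\,v_1^{j_1p^{s_1}}\cdots v_n^{j_np^{s_n}}.\]

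Next I would argue that distinct tuples $(j_1,\dots,j_n)$ with all $j_i\ge1$ give distinct elements of $R_\Gamma$: each such word has the same underlying letter sequence as the reduced representative of $g$ and is therefore itself $\Gamma$-reduced, since reducedness depends only on the letter sequence (as long as the exponents are nonzero); moreover reducedness forbids any M3-swap of two occurrences of the same letter (else M2 would shorten the word), so each position carries a well-defined exponent in every M3-rewriting, and M3-equivalent monomials must have matching exponents at matching positions. Tuples with some $j_i=0$ yield words of length strictly less than $n$, so no cross-cancellation between tuples can occur, and $\prod\binom{u_i}{j_i}$ is the true coefficient of each listed monomial. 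Taking $j_i=1$ for all $i$ then singles out $m=v_1^{p^{s_1}}\cdots v_n^{p^{s_n}}$ with coefficient $\prod u_i$, nonzero in $\Bbbk$ since each $u_i$ is coprime to $p$; and any other monomial of the form $w_1^{p^{\sigma_1}}\cdots w_k^{p^{\sigma_k}}$ with nonzero coefficient must come from a tuple with $j_ip^{s_i}=p^{\sigma_i}$, which forces $k\le n$, with equality requiring all $j_i\ge1$, and then $\sigma_i\ge s_i$ with equality everywhere only for the tuple $(1,\dots,1)$. This establishes existence, maximality of $k=n$, componentwise minimality of the $s_i$, uniqueness of $m$, and the coefficient formula $\prod e_ip^{-s_i}$.

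The remaining assertions follow because $m$ is intrinsic to $g$: applying the above to any reduced representative produces the same $m$, so $n=k$, the M3-equality $v_1\cdots v_n=w_1\cdots w_k$ holds in $R_\Gamma$, and $s_i$ equals the $p$-adic valuation of $e_i$ (so $p^{s_i}\mid e_i$). The main obstacle I anticipate is the distinctness of the monomials across different expansion tuples, which rests on the basis description of $R_\Gamma$ by M3-equivalence classes of reduced words combined with the observation that a reduced word admits no M3-swap exchanging two occurrences of the same letter; both ingredients are already available from the preceding discussion.
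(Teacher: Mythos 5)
Your proof is correct and follows essentially the same route as the paper's: write $e_i=p^{s_i}u_i$, use the Frobenius identity $(1+v_i)^{e_i}=(1+v_i^{p^{s_i}})^{u_i}$, expand the product, and isolate the monomial $v_1^{p^{s_1}}\cdots v_n^{p^{s_n}}$ via the observation that $\Gamma$-reducedness depends only on the underlying letter sequence once all exponents are nonzero. Your explicit justification that distinct exponent tuples give distinct basis elements of $R_\Gamma$ --- because (M3) never exchanges two occurrences of the same letter, so matching positions carry matching exponents across an (M3)-equivalence --- is a detail the paper compresses into the phrase ``precisely once,'' and it is a welcome addition rather than a deviation.
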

\begin{proof}
  Consider a $\Gamma$-reduced representative $v_1^{e_1}\cdots v_n^{e_n}$ of $g$. By definition,
  \begin{equation*}
\mu(v_1^{e_1}\cdots v_n^{e_n}) = (1+v_1)^{e_1}\cdots (1+v_n)^{e_n}
\end{equation*}
which is a possibly infinite (if one of the $e_i$ is less than $0$) $\finfield$-linear combination of words over $V$. Write $e_j=p^{s_j}\ell_j$ so that $p$ does not divide $\ell_j$. Because we are in characteristic $p$, we have $(1+v_j)^{e_j}=(1+v_j^{p^{s_j}})^{\ell_j}$.

  We may now apply a variant of Magnus's original
  argument~\cite{Magnus:1935}*{Satz~I}: multiplying out (using the power
  series for the inverse), we obtain a multiple of $v_1^{p^{s_1}}\cdots
  v_n^{p^{s_n}}$ precisely once, with coefficient $\ell_1\cdots\ell_n\ne
  0\in\finfield$. Other terms either have fewer syllables or larger
  exponents. The monomial $v_1\cdots
  v_n$ and all other monomials with the same number of syllables and
  possibly larger exponents are $\Gamma$-reduced, because any sequence of moves which
  would reduce one of them could be applied in the same way to the original
  $v_1^{e_1}\cdots v_n^{e_n}$  and would reduce its number of factors, as
  well. Therefore the term $v_1^{p^{s_1}}\cdots  v_n^{p^{s_n}}$ indeed is
  uniquely determined as the $\Gamma$-reduced monomial in $\mu(g)$ with non-zero
  coefficient with maximal number of syllables and minimal exponents.

  Since $\mu(g)$ is independent of the choice of representative of $g$, every other $\Gamma$-reduced representative $(v'_1)^{e'_1}\cdots (v'_{n'})^{e'_{n'}}$ must satisfy $n=n'$ and $v_1\cdots v_n=v'_1\cdots v'_{n'}$.  
\end{proof}

From this (and we note it for further use) we may deduce that every element of $A_\Gamma$ has an essentially unique reduced representative:
\begin{proposition}[\cite{Wade}*{Theorem~4.14}]\label{prop:normal_form}
  If $v_1^{e_1}\cdots v_n^{e^n}$ and $w_1^{f_1}\cdots w_m^{f_m}$ are two reduced words representing the same element of $A_\Gamma$, then one can be obtained from the other by a finite number of applications of~(M3). In particular, $n=m$.
\end{proposition}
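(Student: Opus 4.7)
The plan is to combine the equality $\mu(u)=\mu(u')$ with the uniqueness content of Lemma~\ref{lem:wade}: I would first align the two letter sequences by (M3) moves, and then use the well-definedness of the syllable count to force the individual exponents to agree by a short induction on $n$.

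Since $u$ and $u'$ represent the same element $g$, we have $\mu(u)=\mu(u')=\mu(g)$ in $\overline{R_\Gamma}$. Applying Lemma~\ref{lem:wade} over any $\Bbbk$ of characteristic $p>0$ selects a unique $\Gamma$-reduced monomial in $\mu(g)$ of maximal syllable count and minimal exponents; extracted from the representative $u$ it records $n$ syllables with letter sequence $v_1\cdots v_n$, and extracted from $u'$ it records $m$ syllables with letter sequence $w_1\cdots w_m$. By uniqueness $n=m$ and $v_1\cdots v_n=w_1\cdots w_m$ in $R_\Gamma$. The preceding lemma, which identifies (M3)-equivalence classes of $\Gamma$-reduced words with a basis of $R_\Gamma$, then yields a sequence of (M3) moves converting the letter sequence of $u'$ into that of $u$; applying these moves with the exponents attached produces a reduced word $u''=v_1^{f'_1}\cdots v_n^{f'_n}$ that is (M3)-equivalent to $u'$ and shares the letter sequence of $u$.

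It remains to show $e_i=f'_i$ for every $i$. To handle the last index, I would argue by contradiction: assume $e_n\neq f'_n$ and consider $h\coloneqq g\cdot v_n^{-f'_n}$. From $u$ one obtains the representative $v_1^{e_1}\cdots v_{n-1}^{e_{n-1}} v_n^{e_n-f'_n}$ of $h$, with $n$ nonzero syllables; because the applicability of each of (M1)--(M3) to a word of fixed letter sequence depends only on the non-vanishing of exponents, this representative inherits reducedness from $u$. From $u''$ one obtains the truncated word $v_1^{f'_1}\cdots v_{n-1}^{f'_{n-1}}$, which is reduced because $u''$ is and has $n-1$ syllables. But Lemma~\ref{lem:wade} asserts that the syllable count of a reduced representative of $h$ is a fixed integer depending only on $h$, so $n=n-1$, a contradiction; hence $e_n=f'_n$. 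Stripping this common last syllable reduces the problem to two reduced words of length $n-1$ representing the same element, and the induction closes. The step I expect to be the main obstacle is the claim that perturbing a single exponent of a reduced word to another nonzero value preserves reducedness, which I would dispatch by a careful unpacking of the definition of (M1)--(M3), noting that none of these moves is sensitive to the particular nonzero value at a given position.
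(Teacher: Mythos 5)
Your argument is correct, and it takes a genuinely different route from the paper's. The paper runs a single induction on $m$: it multiplies the second word on the left by $v_1^{-e_1}$, invokes Lemma~\ref{lem:wade} to conclude that the resulting $(m+1)$-syllable word cannot be reduced, locates the unique index $k$ where $w_k=v_1$ with all earlier letters commuting past $v_1$, forces $f_k=e_1$ (again by Lemma~\ref{lem:wade}, since otherwise an $m$-syllable reduced word would represent an element of syllable count $m-1$), and strips that syllable --- thereby constructing the (M3) moves and matching the exponents simultaneously. You instead split the problem in two: first you align the letter sequences, by observing that Lemma~\ref{lem:wade} forces $v_1\cdots v_n=w_1\cdots w_m$ in $R_\Gamma$ and then converting this algebraic equality into a chain of (M3) swaps via the lemma identifying (M3)-classes of reduced words with a basis of $R_\Gamma$; second you match exponents by peeling off the \emph{last} syllable and deriving a contradiction with the invariance of the syllable count. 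Your approach buys a clean separation between the combinatorics of the letter pattern (handled by linear independence in $R_\Gamma$) and the arithmetic of the exponents, at the price of leaning on two auxiliary facts: the basis lemma (stated in the paper without proof, but independent of this proposition, so there is no circularity) and the observation that reducedness of a word with fixed letter sequence depends only on which exponents vanish. The latter is exactly right --- (M3) never changes exponents, and (M1)/(M2) applicability after (M3) moves is governed by zero exponents and letter adjacency respectively --- and is worth recording explicitly, as you propose. The paper's version is slightly more economical and more explicitly constructive (it tracks where each syllable of one word lands in the other), but both proofs rest on the same engine, namely Lemma~\ref{lem:wade}.
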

\begin{proof}
  We note first by Lemma~\ref{lem:wade} that $m=n$. We then proceed by
  induction on $m$. Consider the equal elements $v_2^{e_2}\cdots v_m^{e_m}$
  and $v_1^{-e_1}w_1^{f_1}\cdots w_m^{f_m}$. The latter is not $\Gamma$-reduced, again
  by Lemma~\ref{lem:wade}, so there must exist $k\in\naturals$ with $w_k=v_1$
  and $\{v_1,w_i\}\in E$ for all $i\le k$. If $f_k\neq e_1$ then
  $w_1^{f_1}\cdots w_k^{f_k-e_1}\cdots w_m^{f_m}$ is $\Gamma$-reduced, yet again
  contradicting Lemma~\ref{lem:wade}, so $f_k=e_1$ and we apply induction to
  $v_2^{e_2}\cdots v_m^{e_m}$ and $w_1^{f_1}\cdots\widehat{w_k^{f_k}}\cdots
  w_m^{f_m}$, where the factor with hat is left out.
\end{proof}
  
\begin{proposition}\label{prop:Magnus}
  For arbitrary $\Bbbk$, the Magnus map $\mu\colon A_\Gamma\to\overline{R_\Gamma}$ is injective.
  
  It maps $\gamma_n(A_\Gamma)$ into the subgroup $1+\overline\varpi^n$ of
  $1+\overline\varpi\subset \overline{R_\Gamma
}$. We get an induced map of graded Lie algebras 
  \[\mu_L\colon \bigoplus_{n\ge 1}
    \gamma_n(A_\Gamma)/\gamma_{n+1}(A_\Gamma)\to \bigoplus_{n\ge 1}
    (1+\overline\varpi^n)/(1+\overline\varpi^{n+1})\cong \bigoplus_{n\ge 1}
    \varpi^n/\varpi^{n+1}\subset R_\Gamma,\]
  where the Lie algebra structure of $R_\Gamma$ is the one induced from the algebra structure.

  The algebra map induced by $\mu$ on the group algebra $\Bbbk A_\Gamma$ extends
  continuously to an isomorphism of filtered associative $\Bbbk$-algebras
  \begin{equation*}
  \overline{\mu}\colon \overline{\Bbbk A_\Gamma}\xrightarrow{\cong}
  \overline{R_\Gamma}.
\end{equation*}
  In particular,
  \begin{equation*}
\Bbbk A_\Gamma/\varpi^n(A_\Gamma)\cong\overline{\Bbbk
  A_\Gamma}/\overline{\varpi^n(A_\Gamma)}\cong
\overline{R_\Gamma}/\overline{\varpi^n}=R_\Gamma/\varpi^n  \cong \gr(\Bbbk
A_\Gamma)/\gr(\Bbbk A_\Gamma)_{\ge n},
\end{equation*}
using Lemma~\ref{lem:dim} for the last isomorphism. As $\Bbbk$-modules, these
are of course  also isomorphic to $(R_\Gamma)_{<n}\cong \gr(\Bbbk
A_\Gamma)_{<n}$. 
\end{proposition}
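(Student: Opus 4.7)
The plan is to treat the four assertions of Proposition~\ref{prop:Magnus} in order, drawing on Lemmas~\ref{lem:wade} and~\ref{lem:dim}. For injectivity, given $g \in A_\Gamma \setminus \{1\}$ I would fix a $\Gamma$-reduced representative $v_1^{e_1} \cdots v_n^{e_n}$ (Proposition~\ref{prop:normal_form}) and exhibit a monomial with nonzero coefficient in $\mu(g)$. In characteristic $p>0$ this is exactly Lemma~\ref{lem:wade}, where the leading $\Gamma$-reduced monomial $v_1^{p^{s_1}}\cdots v_n^{p^{s_n}}$ carries the nonzero coefficient $\ell_1\cdots\ell_n$. In characteristic $0$ I would cite (or redo) Wade's argument~\cite{Wade}*{Corollary~4.8}, the resulting integer coefficient staying nonzero in $\Bbbk$ since $\integers \hookrightarrow \Bbbk$. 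For a ring $\Bbbk$ of mixed characteristic, the same proof works after reducing modulo a prime ideal of $\Bbbk$ whose residue field has prime or zero characteristic.

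For the filtration statement $\mu(\gamma_n) \subseteq 1+\overline\varpi^n$ I would induct on $n$: the base case is clear from $\mu(v)=1+v$, and the inductive step is the standard commutator estimate $(1+a)(1+b)(1+a)^{-1}(1+b)^{-1} \in 1+\overline\varpi^{m+n}$ for $a \in \overline\varpi^m$, $b \in \overline\varpi^n$, which follows from expanding $(1+a)^{-1} = 1 - a + a^2 - \cdots$. Passing to associated gradeds, the abelian group operation on $\gamma_n/\gamma_{n+1}$ matches addition in $\varpi^n/\varpi^{n+1}$ because $(1+a)(1+b) \equiv 1+a+b$ modulo higher order terms, and the group commutator matches $ab-ba$ modulo $\overline\varpi^{m+n+1}$, so $\mu_L$ is automatically a map of graded Lie algebras with the bracket on the target induced from the associative product.

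For the isomorphism of completions I would extend $\mu$ $\Bbbk$-linearly to a ring map $\Bbbk A_\Gamma \to \overline{R_\Gamma}$; since $\mu(v-1)=v \in \overline\varpi$, this sends $\varpi(A_\Gamma)$ into $\overline\varpi$ and its $n$-th power into $\overline\varpi^n$, so the map is continuous for the adic topologies and extends to $\overline\mu$ on the $\varpi(A_\Gamma)$-adic completion. The induced map on associated gradeds coincides with the map $\alpha$ of Lemma~\ref{lem:dim} (both send $[v-1]$ to $v$), hence is an isomorphism; the standard filtered-iso lemma (an iso on gradeds gives isos on all finite quotients $\Bbbk A_\Gamma/\varpi(A_\Gamma)^n \to R_\Gamma/\varpi^n$, which survive the inverse limit) then forces $\overline\mu$ itself to be an isomorphism. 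The remaining chain of isomorphisms in the statement reads off directly from this together with Lemma~\ref{lem:dim}. The main obstacle I anticipate is the uniform treatment of injectivity for an arbitrary commutative ring $\Bbbk$, since the previous results only explicitly cover characteristic $0$ and prime characteristic; the residue-field reduction sketched above appears to be the cleanest way to fill the gap, but may require a careful bookkeeping of which integer coefficient in the Magnus expansion one singles out.
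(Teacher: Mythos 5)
Your proposal is correct and follows essentially the same route as the paper: injectivity via Lemma~\ref{lem:wade} after reducing coefficients to characteristic $0$ or to a field of characteristic $p$ (the paper reduces through the prime subring $\integers/N$ and a prime $p\mid N$ rather than through a maximal ideal of $\Bbbk$, but the two reductions are interchangeable), the filtration and Lie-algebra statements by the standard commutator computations in $1+\overline\varpi$, and the completion isomorphism by identifying the induced map on associated gradeds with the inverse of $\alpha$ from Lemma~\ref{lem:dim} and passing to the inverse limit. The only nitpick is a slip of direction in your third paragraph: the graded map induced by $\overline\mu$ is the \emph{inverse} of $\alpha$ (which sends $v$ to $[v-1]$), as your own parenthetical already indicates.
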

\noindent 
\begin{proof}
  Let $\Bbbk'$ be the image of $\integers$ in $\Bbbk$; it is either
  $\integers$ or $\integers/N$ for some integer $N$. The case $\integers$ is
  already covered; if $\Bbbk'=\integers/N$, let $p$ be a prime number dividing
  $N$. We prove the stronger statement that the composition
  $A_\Gamma\xrightarrow{\mu}\overline{R_\Gamma}\to\overline{R_\Gamma\otimes_\integers\finfield}$
  is injective, i.e.,~we assume without loss of generality that
  $\Bbbk=\finfield$. Injectivity of $\mu$ for $\Bbbk=\finfield$ directly
  follows from Lemma~\ref{lem:wade}.
    
  It is an elementary calculation in non-commutative power series that
  the $1+\overline\varpi^n$ form a central series of subgroups of
  $1+\overline\varpi$. By the minimality and functoriality of the lower
  central series, 
  \begin{equation*}
  \gamma_n(1+\overline\varpi)\subseteq1+\overline\varpi^n\qquad\text{and then}\qquad
  \mu(\gamma_n(A_\Gamma))\subseteq1+\overline\varpi^n.
\end{equation*}
Elementary
  calculations in the non-commutative power series ring also show
  that we have an isomorphism of associated graded Lie algebras
  \begin{equation*}
  \bigoplus_{n\ge 1} (1+\overline\varpi^n)/(1+\overline\varpi)^{n+1}
  \xrightarrow{\cong} \bigoplus_{n\ge 1} \varpi^n/\varpi^{n+1}; [1+w]\mapsto [w]
\end{equation*}
where the right hand side is the
  graded Lie algebra structure underlying the associated graded algebra 
  $R_\Gamma$ (with only the central summand $\varpi/\varpi^1$ of $R_\Gamma$
  missing). As $R_\Gamma$ is already a graded algebra, it coincides with its
  associated graded. For details of these
  computations, compare e.g.~\cite{Wade}*{Lemma 4.10}.
  
  Finally, the induced algebra map
  $\Bbbk A_\Gamma\to \overline{R_\Gamma}$ is compatible with the augmentation
  homomorphisms as the same is true for the initial map $\mu\colon A_\Gamma
  \to 1+\overline{\varpi}$ (all elements on the left and on the right have augmentation
  $1$). Consequently, it  preserves the
  filtrations by powers of the augmentation ideals and induces a homomorphism
  $\gr(\mu)$ 
  on the associated
  graded algebra. On the generating set $V$ this homomorphism
  is evidently the inverse of the map $\alpha$ of
  Lemma~\ref{lem:dim}.

  We learn that our homomorphism of complete filtered
  algebras $\overline{\mu}\colon
  \overline{\Bbbk A_\Gamma}\to \overline{R_\Gamma}$  induces an
  isomorphism of the associated graded algebras. By general theory therefore
  $\overline{\mu}$ itself is an isomorphism. In more detail,
 $\overline{\Bbbk A_\Gamma}$ is the inverse limit of the
  $\overline{\Bbbk A_\Gamma}/\overline{\varpi^n(A_\Gamma)}$, and
  correspondingly for $\overline{R_\Gamma}$. Inductively and using the
  $5$-lemma, $\overline{\mu}/\overline{\varpi^n}\colon \overline{\Bbbk
    A_\Gamma}/\overline{\varpi^n(A_\Gamma)} \to
  \overline{R_\Gamma}/\overline{\varpi^n}$ is an isomorphism (as
  $\overline{\mu}/\overline{\varpi^n}$ is the extension of
  $\overline{\mu}/\overline{\varpi^{n-1}}$ by the isomorphism
    $\gr(\mu)_n$). Finally,   $\overline{\mu}$ is an isomorphism as limit of isomorphisms.
\end{proof}

\section{Cohomology}\label{ss:coh}

A (topological) way to define and compute the cohomology of a discrete group
$G$ is via a \emph{classifying space} $X_G$. By definition, this is a connected CW-cell complex with $\pi_1(X_G)=G$ whose universal covering is
contractible. We then have $H^*(G;\Bbbk)=H^*(X_G;\Bbbk)$.

\begin{proof}[Proof of Theorem~\ref{thm:coh}, first claims]
To compute the structure of the cohomology ring $H^*(A_\Gamma;\Bbbk)$,
we first show that $X_\Gamma$ of \eqref{eq:classifying_space} is a (particularly nice) classifying space for
$A_\Gamma$. The space $X_\Gamma$ inherits a CW-cell structure (indeed a cube
complex structure) from the product
cell structure of $(S^1)^V$, where $S^1$ has just one $0$-cell $\{*\}$
consisting of the base point and one
$1$-cell. Then $X_\Gamma$ has a single vertex $*^V$ and precisely one loop
$(S^1)^{\{v\}}\times\{*\}^{V\setminus\{v\}}$ for each generator $v\in V$. The
$2$-cells in $X_\Gamma$ give the commutation relations.
By the standard computation of the fundamental group of CW-complexes (based on
the van Kampen theorem) we then have
$\pi_1(X_\Gamma,*^V)=A_\Gamma$.

Furthermore, the link of the single vertex in $X_\Gamma$ is a flag complex, since every subset of a
clique is a clique. Therefore, $X_\Gamma$ is a cube complex whose link
is a flag complex, so $X_\Gamma$ is a locally CAT(0)
space~\cite{Gromov:1987},
see~\cite{BridsonHaefliger:1999}*{Theorem~5.18}, so its universal
cover is contractible.

  The cells given in the expression of $X_\Gamma$ above form a basis
  of the homology of $X_\Gamma$: the
  differentials in the cellular chain complex vanish identically, because
  every cell sits in a subcomplex which is the cellular chain complex of a
  torus with precisely this property. Note that we get a basis of
  $H_*(X_\Gamma;\Bbbk)$ as free $\Bbbk$-module by the images of the
  fundamental classes of all subtori $T^C$ where $C$ runs through the cliques
  in $\Gamma$. As the homology is finitely generated free, the cohomology is canonically the
  dual of the homology. We see that $H^*(X_\Gamma;\Bbbk)$ is precisely the
  quotient of the exterior algebra
  $H^*(T^V;\Bbbk)= \textstyle{\bigwedge}^*(\Bbbk V)$, the cohomology of the ambient torus
  $T^V$, by the submodule generated by all products  $v_1\dots v_r$ such that
  $v_1,\dots,v_r$ do not span a clique in $\Gamma$. The comparison map is
  induced by the inclusion $X_\Gamma\hookrightarrow T^V$. That this map is surjective
  with the claimed kernel follows by naturality and the know (co)homology of
  $T^V$, together with the information about the rank of $H^*(X_\Gamma;\Bbbk)$
  we obtained from the cellular complex. Now the quotient algebra is precisely the
  algebra $S_\Gamma$ and we have proven
  $H^*(A_\Gamma;\Bbbk)=H^*(X_\Gamma;\Bbbk)=S_\Gamma$ as algebras.
\end{proof}

We note that $H^*(X_\Gamma;\Bbbk)=S_\Gamma$ has a natural $\Bbbk$-basis indexed by cliques
$C$ in $\Gamma$: a degree-$k$ basis element corresponding to a clique
$C=\{v_0,\dots,v_{k-1}\}$ is given by the product
$v_C\coloneqq v_{k-1}\cdots v_0$ ---to make this definite, we pick a
total ordering of the vertices and write the factors in decreasing
order.

\subsection{Koszul algebras}

Back to general theory, consider a graded associative algebra $R$
presented as $T(W)/I$ for a finitely generated free $\Bbbk$-module $W$, its
tensor algebra $T(W)$  and an ideal
$I\le T(W)$. In case $I$ is generated by a subspace $I_2$ of
$W^{\otimes2}$, the algebra is called \emph{quadratic}; and it then
admits a \emph{quadratic} dual $R^!\coloneqq T(W^*)/(I_2^\perp)$; here
by $I_2^\perp$ we mean the subset of
$(W^*)^{\otimes2}\cong(W^{\otimes2})^*$ annihilating $I_2$. Clearly
$R^{!!}\cong R$. Now, with $\Bbbk V$ the free $\Bbbk$-module with
basis $V$, setting
\begin{equation*}
  \begin{split}
    G_R &:=\langle v\otimes w-w\otimes v\text{ for }\{v,w\}\in E\rangle \subset
    \Bbbk V^{\otimes 2},\\
    G_S&:= \langle v\otimes w\text{ for
             }\{v,w\}\notin E, v\otimes w+w\otimes v\text{ for }\{v,w\}\in
             E\rangle \subset \Bbbk V^{\otimes 2},
  \end{split}
\end{equation*}
we have as algebras
\begin{equation*}
  R_\Gamma = T(\Bbbk V)/\langle G_R\rangle\quad\text{and}\
  S_\Gamma= {\textstyle\bigwedge}^*(\Bbbk V)/\langle v\wedge w\text{ for
             }\{v,w\}\notin E\rangle     = T(\Bbbk V)/\langle G_S\rangle.     
\end{equation*}
Let us identify $\Bbbk V^{\otimes 2}$ with $(\Bbbk V^{\otimes 2})^*$
via the basis $\{v\otimes w\mid v,w\in V\}$ and its dual basis. Then
$G_S$ is the annihilator of $G_R$ (they clearly annihilate each other, and the
ranks add up to the total dimension $|V|^2$), and therefore $R_\Gamma$ and $S_\Gamma$ are
quadratic duals of each other.

Returning to generality, recall that a quadratic algebra $R$ is called
\emph{Koszul} if its \emph{Koszul complex} is acyclic,
\cite[3.4.7]{LodayVallette}. We recall the Koszul complex (in our concrete
situation) below and we mention that this is only one of a number of different
equivalent characterizations of the Koszul property. It implies that the Yoneda algebra
$\Ext_R(\Bbbk,\Bbbk)$ is isomorphic to $R^!$, compare \cite[Theorem
2.5]{Priddy:1970}.

\begin{proof}[Proof of Theorem~\ref{thm:coh}, second claim]
We now show that $R_\Gamma$ and $S_\Gamma$ are Koszul. Deliberately, we are a
bit brief as we believe that this is mainly of interest to readers which have the
required background.  In fact, a quadratic algebra is Koszul if and only if
its quadratic dual is \cite[Proposition 3.4.8]{LodayVallette}. Therefore it
suffices to prove the Koszul property for $S_\Gamma$, and there is a simple
sufficient (but not necessary) condition, the existence of a quadratic
Gr\"obner basis. Recall that a \emph{Gr\"obner basis} for an ideal
$I\le\bigwedge^*(\Bbbk V)$ is a set $G$ of generators for $I$ such
that the leading terms (with respect to a compatible order of
monomials) of elements of $G$ generate the same ideal as the leading
terms of all elements of $I$. Now
$G\coloneqq\{v\wedge w\mid\{v,w\}\notin E\}$ is a Gr\"obner basis, as
follows from \emph{Buchberger's criterion}: ``for all $f,f'\in G$
whose respective leading terms $g,g'$ have least common multiple
$\ell$, the syzygy $(\ell/g)f-(\ell/g')f'$ must vanish''.

Alternatively and without using Gr\"obner basis, the work of Fr\"oberg~\cite{Froberg:1975}*{in particular Section~3}
also implies that $R_\Gamma$ (and $S_\Gamma$) are Koszul. His proof runs
essentially as follows and uses directly the \emph{Koszul complex} of
$R_\Gamma$ which we now construct. Consider the right
$R_\Gamma$-module $P_*=\Hom_\Bbbk(S_\Gamma,R_\Gamma)$. Recall that,
qua $\Bbbk$-module, $S_\Gamma$ is finitely generated free with basis
indexed by cliques in $\Gamma$. Consequently, this basis induces and
isomorphism $P_*\xrightarrow{\cong} \bigoplus_{C} v_C R_\Gamma$, where the sum is over the
cliques in $\Gamma$. It is bigraded by $S_\Gamma$- and
$R_\Gamma$-degree.
Consider the map $d\colon P_*\to P_*$ with
\begin{equation*}
d(f)(p)=\sum_{v\in V}v f(v p)\quad\text{ for }f\in P_*,\; p\in S_\Gamma.
\end{equation*}
In our basis,
$d((v_{k-1}\cdots v_0) \cdot r) = \sum (-1)^j (v_{k-1}\cdots
\widehat{v_j}\cdots v_0)\cdot v_j r$.  A direct computation shows that
$d^2=0$. Note that $d$ increases the $R_\Gamma$-degree by $1$, and
decreases the $S_\Gamma$-degree by $1$, so $(P_*,d)$ becomes a chain
complex of finitely generated free $R_\Gamma$-modules, graded by
$S_\Gamma$-degree. 

To prove acyclicity of the Koszul complex $(P_*,d)$  we define a chain
contraction map $s\colon P_*\to P_{*+1}$ 
of $\Bbbk$-modules as follows. Recall that we have a $\Bbbk$-basis of $P_*$
given by elements $v_C\cdot w$ for a clique $C$ of $\Gamma$ and a
basis element $w$ of $R_\Gamma$ given as a $\Gamma$-reduced monomial over $V$ according
to Definition \ref{def:moves}. To define $s(v_C\cdot w)$ we consider two cases.
If we can write  $w=v w'$ in reduced form with $v\in V$ and with $w'$ a word in letters from $V$ in such a
manner that $v<\min C$ (for the total ordering on $V$ picked above)
and such that $C\cup\{v\}$ is a clique of $\Gamma$, then we choose $v$ minimal with this property, and we set
$s(v_C\cdot v w')\coloneqq v_{C\cup\{v\}}\cdot w'$. Otherwise, we
set $s(v_C\cdot w)\coloneqq0$.

We now carry out the elementary calculation to see that $s$ is a chain
contraction, meaning $s d+d s=1-\epsilon$, where
$\epsilon\colon P_*\to \Bbbk$ is the augmentation map, projecting onto
the summand of bidegree $(0,0)$. For this, consider $x=v_C\cdot w$. The
calculation splits into three cases.
\begin{enumerate}
\item If $C=\emptyset$ and $w=1$, then $(s d +d s)(x)=0=(1-\epsilon)(x)$. 
\item Assume that $C=\{v_0,\dots,v_k\}\neq\emptyset$ and $w$ cannot be
  written in the form $w=v w'$ as above. Then
  \begin{equation*}
d s(x)=0\quad\text{ while }\quad s d(x)= \sum (-1)^j s((v_{k-1}\cdots \widehat{v_j}\cdots v_0)\cdot v_j
  w).
\end{equation*}
 By hypothesis, no letter in $w$ can be swapped with $v_j$ and added to
  $C\setminus\{v_j\}$, so all summands vanish except the $0$th which is
  $x$. 
\item Assume that $C=\{v_0,\dots,v_k\}$ and $w$ can be written in the
  form $v_{-1}w'$ such that $C\cup\{v_{-1}\}$ is a clique in $\Gamma$, with
  $v_{-1}<\min C$, chosen minimal among all such possibilities. Then $v_{-1}$
  commutes with all $v_j$, so
  \begin{equation*}
    \begin{split}
      s d(x) &= \sum (-1)^j s(v_{C\setminus\{v_j\}}\cdot v_j v_{-1} w')=\sum_{j=0}^{k-1} (-1)^j v_{C\setminus\{v_j\}\cup\{v_{-1}\}}\cdot v_j w',\\
      d s(x) &= d(v_{C\cup\{v\}}\cdot w')=\sum_{j=-1}^{k-1} (-1)^{j+1}
      v_{C\setminus\{v_j\}\cup\{v_{-1}\}}\cdot v_j w',
    \end{split}
  \end{equation*}
  and the terms cancel pairwise except the one with $j=-1$, giving again
  $(ds+sd)(x)=x$.
\end{enumerate}
It follows that $P_*$ is a free $R_\Gamma$-resolution
of $\Bbbk$.
%
\end{proof}

We note that the usual definition of Koszul algebras is given over fields of
characteristic $0$; however, in our case, we need not impose any restriction
on the commutative ring $\Bbbk$ (other than interpreting $(\Bbbk V)^*$ as
naturally isomorphic to $\Bbbk V$), since the rings $R_\Gamma$ and $S_\Gamma$ are $\Bbbk$-free. 

\section{Central series}\label{ss:central}

\subsection{Labute's general theory}

Labute gave in~\cite{Labute:1985} a condition under which a 
presentation $\langle V\mid\mathcal R\rangle$ of a group $G$
determines a presentation of the associated Lie algebra
$L(G)\coloneqq\bigoplus_{n\ge1}\gamma_n(G)/\gamma_{n+1}(G)$. Such a
group presentation is now called ``mild'', and Anick gave
in~\cite{Anick:1987} a valuable criterion for this to happen: view all
$r\in\mathcal R$ as elements of the free associative algebra
$T(\integers V)$, under the Magnus embedding $F_V\to T(\integers
V)$. Let $n$ be such that $r-1\in\varpi^n\setminus\varpi^{n+1}$, and
let $r'$ denote the image of $r$ in the quotient
$\varpi^n/\varpi^{n+1}$. Then $\langle V\mid\mathcal R\rangle$ is mild
if and only if $\{r'\mid r\in\mathcal R\rangle$ is ``inert''. We need not define here the meaning of ``inert'' (a.k.a.\ ``strongly free'', see~e.g.~\cite{HalperinLemaire:1987}), but merely note that there
are powerful sufficient conditions guaranteeing that a set is inert in
the free associative algebra, one of them being that it forms a
Gr\"obner basis.  It follows then quite generally that the Lie algebra
$L(G)$ admits as presentation
$\langle V\mid r'\;\forall r\in\mathcal R\rangle$,
see~\cite{Labute:1985}*{Theorem~1}; and a similar statement holds for
the restricted Lie algebra
$\bigoplus_{n\ge1}\lambda_{n,p}(G)/\lambda_{n+1,p}(G)$,
see~\cite{Labute:1985}*{Theorem~3}. Labute's conditions are non-trivial to check, so we shall in fact recover his results rather than
use them.

\subsection{First easy results for RAAGs}

By Proposition~\ref{prop:Magnus} the rings $R_\Gamma/\varpi^n$ and $\Bbbk A_\Gamma/\varpi(G)^n$ are isomorphic, so the dimension subgroups $\delta_{n,\mu}$ and $\delta_{n,\Bbbk A_\Gamma}$ are equal. Furthermore, since the Magnus map $\mu$ has image in the subring of $R_\Gamma$ generated by $1$ and $V$, the groups $\delta_{n,\mu}$ depend on $\Bbbk$ only via the image $\Bbbk'$ of $\integers$ in $\Bbbk$.

We consider two cases: if $\integers\subseteq\Bbbk$ then the dimension subgroups associated with the rings $\Bbbk$ and $\rationals$ agree. If, on the other hand, $\finfield\subseteq\Bbbk$, then the dimension subgroups associated with the rings $\Bbbk$ and $\finfield$ agree. In all cases, we reduce to the case $\Bbbk\in\{\finfield,\integers\}$.

\begin{proof}[Proof of Theorem~\ref{thm:cs1}]
  We  apply the classical results of Jennings and Hall. For $\Bbbk=\rationals$ we have $\gamma_{n,0}=\delta_{n,\Bbbk A_\Gamma}$; compare~\cites{Jennings:1955,Hall} which treat the case of torsion-free nilpotent groups to which the general case easily reduces. For $\Bbbk=\finfield$ we have $\gamma_{n,p}=\delta_{n,\Bbbk A_\Gamma}$; compare~\cite{Jennings:1941} which treats the case of finite $p$-groups to which the general case easily reduces.
\end{proof}

\section{Lie algebras associated with $\Gamma$}\label{ss:lie}
Recall that the cohomology of a Lie algebra $L$, defined as $\Ext_{U(L)}(\Bbbk,\Bbbk)$, may be computed using
its \emph{Chevalley complex} $({\bigwedge}^*(L^\circ),d)$, with
$L^\circ$ the ``small dual'' of $L$, namely
\begin{equation*}
L^\circ=\{\phi\in L^*\mid \ker\phi\text{ contains a
  finite-codimensional ideal}\},
\end{equation*}
and the differential
$d\colon L^\circ\to\bigwedge^2 L^\circ$ is the dual of the Lie
bracket map $\bigwedge^2L\to L$ (extended to all degrees by requiring $d$ to
be a graded derivation). Note that $L^\circ$ is just so
defined that the image of $d$ belongs to
$\bigwedge^2 L^\circ\subset (\bigwedge^2 L)^*$. Since
$\bigwedge^*(L^\circ)$ is a graded commutative algebra and $d$ is a
derivation, the homology $(\bigwedge^*(L^\circ),d)$ is naturally a graded
commutative algebra.

\begin{proof}[Proof of Theorem~\ref{thm:lie1}, Lie algebra cohomology of $L_\Gamma$]
  The enveloping algebra of $L_\Gamma$ is $R_\Gamma$, which is Koszul
  with Koszul dual $S_\Gamma$, so we have
  $$ H^*(L_\Gamma;\Bbbk)=H^*(\bigwedge^*(L^\circ),d)=\Ext_{R_\Gamma}(\Bbbk,\Bbbk)=S_\Gamma.$$
Note that $\bigwedge^* L_\Gamma^\circ$ admits two gradings, one as an
exterior algebra and one inherited from the grading of $L_\Gamma$. In
$H^*(\bigwedge^*(L^\circ),d)$, these two gradings coincide --- this is
precisely the content of $S_\Gamma$ being a Koszul algebra.
\end{proof}

In the following, we write $L$ for $L_{\Gamma}$ if the characteristic of $\Bbbk$
is $0$, and for
$L_{\Gamma,p}$ if the characteristic of $\Bbbk$ is $p$, and view $L$ as a subset of $R_\Gamma=U(L)$.
Following Magnus' method~\cite{MagnusKarrassSolitar}*{Theorem~5.12}, consider
$x\in L_n$, i.e.~homogeneous of degree $n$. Then $x$ is a linear combination
(with coefficients in $\Bbbk$) of a collection of bracket arrangements
$\phi_i=\phi_i(v_1,\dots,v_n)$. The assignment
\begin{equation*}
L_n\ni\phi_i
\mapsto\phi_i(v_1,\dots,v_n)\in \gamma_n\subseteq A_\Gamma
\end{equation*}
is well defined
on the subset of bracket arrangements, since $[v,w]=1\in A_\Gamma$ for each
$\{v,w\}\in E$. It extends $\Bbbk$-linearly to a map 
\[\nu\colon L_n\to \gamma_n/\gamma_{n+1}\otimes_\integers\Bbbk\]
of $\Bbbk$-modules. This map is clearly surjective, since
$\gamma_n/\gamma_{n+1}$ is spanned by $n$-fold bracket arrangements, for an arbitrary
group. Furthermore, the composition $\mu_L\circ\nu\colon L\to R_\Gamma$ with
$\mu_L$ given in Proposition~\ref{prop:Magnus} is a Lie algebra map sending $v$ to
$v$. Therefore this composition is the inclusion of $L$ into $R_\Gamma$ and is
in particular injective. This implies that $\nu$ is an isomorphism with
inverse the Magnus map $\mu_L$.

\begin{proof}[Proof of Theorem~\ref{thm:cs2}, characteristic $0$]
  Consider $\Bbbk=\integers$. Since $L_\Gamma$ is $\integers$-free, it follows in particular that $\gamma_n(A_\Gamma)/\gamma_{n+1}(A_\Gamma)$ is torsion-free for each $n$, and therefore $\gamma_{n,0}(A_\Gamma)=\gamma_n(A_\Gamma)$ for all $n$.
\end{proof}

\begin{proof}[Proof of Theorem~\ref{thm:lie2}, first two claims]\strut\\
  The isomorphism $\nu$ identifies $L_\Gamma$ and $\bigoplus_{n\ge1}(\gamma_n/\gamma_{n+1})\otimes_\integers\Bbbk$.
\end{proof}

\begin{lemma}\label{lem:assgraded_exponentp}
  Consider $\Bbbk=\integers$, and define the ideal
  $\varpi_p=\langle p,V\rangle$ of $R_\Gamma$.

  The associated graded ring
  $\bigoplus_{n\ge0}\varpi_p^n/\varpi_p^{n+1}$ is isomorphic to
  $R_\Gamma\otimes_\integers\finfield[\pi]$, with $\pi$ of degree $1$
  mapped to $[p]\in \varpi_p/\varpi^2_p$ under the isomorphism.
\end{lemma}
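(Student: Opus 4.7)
The plan is to build an explicit graded algebra map
\[\Phi\colon R_\Gamma\otimes_\integers\finfield[\pi]\to\bigoplus_{n\ge 0}\varpi_p^n/\varpi_p^{n+1}\]
sending each $v\in V$ to the class $[v]\in\varpi_p/\varpi_p^2$ and $\pi$ to the class $[p]\in\varpi_p/\varpi_p^2$, and then verify bijectivity piece by piece using the $\integers$-grading of $R_\Gamma$ by $\varpi$-degree. First I would check that $\Phi$ is well defined. The partial commutation relations between the $[v]$ are inherited from $R_\Gamma$, and $[p]$ is central because $p$ is. Because $p\in\varpi_p$, multiplication by $p$ shifts the $\varpi_p$-filtration by one step, so $p$ acts trivially on the associated graded; hence the target is an $\finfield$-algebra and the relation $p\cdot 1=0$ in $\finfield[\pi]$ is respected. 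Surjectivity of $\Phi$ is then immediate, since the associated graded is generated over its degree-$0$ piece $R_\Gamma/\varpi_p=\finfield$ by the degree-$1$ piece $\varpi_p/\varpi_p^2$, which in turn is spanned by the classes of $p$ and the elements of $V$.

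For injectivity the main task is to describe $\varpi_p^n$ concretely. From $\varpi_p=pR_\Gamma+\varpi$, together with centrality of $pR_\Gamma$, one obtains $\varpi_p^n=\sum_{k+l=n}p^k\varpi^l$. Intersecting with the $\varpi$-degree-$m$ component $R_{\Gamma,m}$ and using that $(\varpi^l)_m=R_{\Gamma,m}$ when $m\ge l$ and vanishes otherwise, the sum collapses to
\[(\varpi_p^n)_m=p^{\max(0,n-m)}R_{\Gamma,m}.\]
Since $R_\Gamma$ is free as a $\integers$-module (by the explicit basis of $\Gamma$-reduced monomials established in Section~\ref{ss:magnus}), each $R_{\Gamma,m}$ is $\integers$-free, so multiplication by any power of $p$ on it is injective.

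Finally I would assemble the pieces. For $m>n$ the summand $(\varpi_p^n/\varpi_p^{n+1})_m$ vanishes; for $m\le n$ it equals $p^{n-m}R_{\Gamma,m}/p^{n+1-m}R_{\Gamma,m}\cong R_{\Gamma,m}\otimes_\integers\finfield$, with basis given by the classes of elements $p^{n-m}w$ as $w$ runs through reduced monomials of degree $m$. Re-indexing by the bidegree $(m,k)=(m,n-m)$ rewrites the associated graded as $R_\Gamma\otimes_\integers\finfield[\pi]$, with $[p]^k$ playing the role of $\pi^k$, and under this identification $\Phi$ sends $w\otimes\pi^k$ to $[p^kw]$, which is exactly the corresponding basis element. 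Hence $\Phi$ is bijective on each bigraded piece, and therefore a graded algebra isomorphism. The principal subtlety is the bookkeeping with the two interacting gradings; once the formula for $(\varpi_p^n)_m$ is in hand, everything else is a routine collapse.
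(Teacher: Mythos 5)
Your proof is correct and is essentially the paper's argument made explicit: the paper also decomposes the $\varpi_p$-adic filtration against the $\varpi$-grading and uses $\integers$-freeness of $R_\Gamma$ to conclude that passing to the associated graded replaces each copy of $\integers$ by $\gr_{(p)}\integers=\finfield[\pi]$, i.e.\ tensors with $\finfield[\pi]$. Your formula $(\varpi_p^n)_m=p^{\max(0,n-m)}R_{\Gamma,m}$ and the explicit algebra map $\Phi$ are exactly the bookkeeping the paper leaves implicit.
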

\begin{proof}
  Powers of $\varpi_p$ define a new filtration on $R_\Gamma$, in which $v\in V$ still has degree $1$, but in addition $p$ also has degree $1$; thus for instance $p^2v^3$ belongs to the fifth term of the filtration. The ring $R_\Gamma$ is $\integers$-free. When passing to the associated graded ring for the new grading, we get on the one hand $\bigoplus \varpi_p^n/\varpi_{p}^{n+1}$. On the other hand, this graded ring is obtained from the old associated graded (which is the graded algebra $R_\Gamma$) by replacing each copy of $\integers$ by its own associated graded under the filtration $(p^n)$, namely by $\finfield[\pi]$. This replacement amounts to tensoring over $\integers$ with $\finfield[\pi]$.
\end{proof}

In case $p\ge 3$, we are now ready to identify the non-restricted Lie
algebra $\bigoplus_{n\ge1}\lambda_{n,p}/\lambda_{n+1,p}$ with
$L_\Gamma\otimes_\integers\finfield[\pi]$. Let us temporarily write
$\beta_n\coloneqq\mu^{-1}(1+\overline\varpi_p^n)$. We make the following claim.
\begin{lemma}\label{lem:exponentp}
  For $p\ge 3$ prime, the Magnus map $\mu$ induces a composition of
  (non-restricted) Lie algebra isomorphisms over $\finfield[\pi]$, still written $\mu_L$,
  \[\mu_L\colon \bigoplus_{n\ge1}\lambda_{n,p}/\lambda_{n+1,p}\to\bigoplus_{n\ge1}\beta_n/\beta_{n+1}\to L_\Gamma\otimes_\integers\finfield[\pi],\]
  with the first map induced by inclusion $\lambda_{n,p}\le\beta_n$ and the second map induced by $\beta_n/\beta_{n+1}\ni [1+a]\mapsto a\in\varpi_p^n/\varpi_p^{n+1}$.
  
  In particular, we have $\beta_n=\lambda_{n,p}$.
\end{lemma}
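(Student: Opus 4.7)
The plan is to construct an inverse map $\nu_p\colon L_\Gamma\otimes_\integers\finfield[\pi]\to\bigoplus_{n\ge1}\lambda_{n,p}/\lambda_{n+1,p}$ and verify that $\mu_L\circ\nu_p$ is the identity; this will force each arrow in the composition defining $\mu_L$ to be an isomorphism and, as a byproduct, give $\beta_n=\lambda_{n,p}$. First one must check $\mu(\lambda_{n,p})\subseteq 1+\overline{\varpi_p}^n$ so that the first arrow of $\mu_L$ is even defined. It suffices to show that the subgroups $(1+\overline{\varpi_p}^n)_{n\ge1}$ form an exponent-$p$ central series of $1+\overline{\varpi_p}$: the commutator bound $[1+\overline{\varpi_p}^m,1+\overline{\varpi_p}^n]\subseteq 1+\overline{\varpi_p}^{m+n}$ is as in Proposition~\ref{prop:Magnus}, and the $p$-power bound follows from
\begin{equation*}
(1+a)^p=1+pa+\sum_{k=2}^{p}\binom{p}{k}a^k\in 1+\overline{\varpi_p}^{n+1},\qquad a\in\overline{\varpi_p}^n,\ n\ge 1,
\end{equation*}
using $p\in\varpi_p$ for the linear term and $a^k\in\overline{\varpi_p}^{kn}\subseteq\overline{\varpi_p}^{n+1}$ for $k\ge 2$. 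Minimality of $\lambda_{n,p}$ then yields $\lambda_{n,p}\subseteq\beta_n$, while the second arrow $[1+a]\mapsto a$ is a Lie algebra map into $R_\Gamma\otimes_\integers\finfield[\pi]$ by Lemma~\ref{lem:assgraded_exponentp}, whose image will turn out to lie in $L_\Gamma\otimes_\integers\finfield[\pi]$ once Step~3 is complete.

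To construct $\nu_p$, we extend the Magnus-type map $\nu$ built earlier in this section: for a bracket arrangement $\phi=\phi(v_1,\dots,v_m)\in L_\Gamma$ of degree $m$ and $k\ge 0$, set $\nu_p(\phi\otimes\pi^k)$ to be the class of the group commutator $\phi_{A_\Gamma}^{p^k}\in\lambda_{m+k,p}$ in $\lambda_{m+k,p}/\lambda_{m+k+1,p}$. For this to define an $\finfield[\pi]$-Lie algebra homomorphism, the crucial requirement is that $g\mapsto g^p$ descend to a well-defined $\finfield$-linear map $\lambda_{n,p}/\lambda_{n+1,p}\to\lambda_{n+1,p}/\lambda_{n+2,p}$; this is precisely where $p\ge 3$ enters. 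By the Hall--Petresco identity $(gh)^p=g^p h^p\prod_{i=2}^p c_i^{\binom{p}{i}}$ with $c_i\in\gamma_i(\langle g,h\rangle)\subseteq\lambda_{in,p}$, the factors with $3\le i\le p-1$ lie in $\lambda_{n+2,p}$ because $in\ge n+2$, the factor $c_2^{\binom{p}{2}}$ lies in $(\lambda_{n+1,p})^p\subseteq\lambda_{n+2,p}$ because $p$ odd implies $p\mid\binom{p}{2}$, and $c_p\in\lambda_{pn,p}\subseteq\lambda_{n+2,p}$. Since the relations $[v,w]=0$ for $\{v,w\}\in E$ also hold in $A_\Gamma$, the map $\nu_p$ is well-defined; it is surjective because $\lambda_{n,p}=\prod_{m+k\ge n}\gamma_m^{p^k}$ implies its target is generated, modulo $\lambda_{n+1,p}$, by $p^k$-th powers of commutator arrangements.

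Finally, we verify $\mu_L\circ\nu_p=\mathrm{id}$ by a direct Magnus computation. The discussion preceding Theorem~\ref{thm:cs2} gives $\mu(\phi_{A_\Gamma})=1+\phi+y$ with $y\in\overline{\varpi}^{m+1}\subseteq\overline{\varpi_p}^{m+1}$. Induction on $k\ge 0$, applying the binomial expansion of Step~1, then yields $\mu(\phi_{A_\Gamma}^{p^k})\equiv 1+p^k\phi\pmod{\overline{\varpi_p}^{m+k+1}}$: writing $\phi_{A_\Gamma}^{p^{k-1}}=1+p^{k-1}\phi+z$ with $z\in\overline{\varpi_p}^{m+k}$, the $p$-linear term contributes $p^k\phi+pz\equiv p^k\phi$, the middle binomial terms $\binom{p}{j}(p^{k-1}\phi+z)^j$ for $2\le j\le p-1$ lie in $\overline{\varpi_p}^{1+j(m+k-1)}\subseteq\overline{\varpi_p}^{m+k+1}$, and the terminal term $(p^{k-1}\phi+z)^p\in\overline{\varpi_p}^{p(m+k-1)}$ lies in $\overline{\varpi_p}^{m+k+1}$ precisely because $p\ge 3$ forces $(p-1)(m+k-1)\ge 2$. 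Under the identification $[p]\leftrightarrow\pi$ of Lemma~\ref{lem:assgraded_exponentp}, the class of $1+p^k\phi$ in $\beta_{m+k}/\beta_{m+k+1}$ corresponds to $\phi\otimes\pi^k\in L_\Gamma\otimes_\integers\finfield[\pi]$, whence $(\mu_L\circ\nu_p)(\phi\otimes\pi^k)=\phi\otimes\pi^k$. This forces $\nu_p$ to be injective, both arrows in $\mu_L$ to be isomorphisms, and $\beta_n=\lambda_{n,p}$. The main obstacle is the Hall--Petresco analysis of Step~2: when $p=2$ we have $\binom{2}{2}=1\notin 2\integers$, the $p$-power map fails to be $\finfield$-linear on the graded quotients, and the hypothesis $p\ge 3$ is genuinely needed.
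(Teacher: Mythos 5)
Your argument is correct and follows essentially the same route as the paper's: both hinge on the Hall--Petresco identity to make $\bigoplus_{n\ge1}\lambda_{n,p}/\lambda_{n+1,p}$ an $\finfield[\pi]$-Lie algebra (exactly where $p\ge3$ enters), and both invert $\mu_L$ via a section $\nu_p$ coming from the presentation of $L_\Gamma\otimes_\integers\finfield[\pi]$. The only differences are cosmetic: you get $\lambda_{n,p}\le\beta_n$ from minimality of the exponent-$p$ series rather than by evaluating $\mu$ directly on the generators $\gamma_m^{p^i}$, and you check $\mu_L\circ\nu_p=\id$ by an explicit induction on $\mu(\phi^{p^k})$ where the paper appeals to surjectivity and the universal property.
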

\begin{proof}
  To check that the first map is well-defined, it suffices to show
  $\lambda_{n,p}\le\beta_n$. We have
  $\varpi_p^n=\sum_{m+i\ge n}p^i\varpi^m$. Consider $g\in\gamma_m$, so
  by definition $\mu(g)=1+x$ for some $x\in\varpi^m$. We then have
  $\mu(g^{p^i})=(1+x)^{p^i}=1+p^i x+\cdots\in1+\varpi_p^{m+i}$, so
  $\mu(\gamma_m^{p^i})\subseteq 1+\varpi_p^{m+i}$. Since
  $\lambda_{n,p}=\prod_{m+i\ge n}\gamma_m^{p^i}$, we have shown
  $\lambda_{n,p}\le\beta_n$.

  Because the Magnus map
  $\mu\colon A_\Gamma\to 1+\overline\varpi\subset R_\Gamma$ is
  injective by Proposition~\ref{prop:Magnus}, so is the induced map
  $\beta_n/\beta_{n+1}\to
  (1+\overline\varpi_p^n)/(1+\overline\varpi_p^{n+1})=\varpi_p^n/\varpi_p^{n+1}$,
  which is our second map.
  
  Since $p\ge 3$, the assignment $\pi\cdot [g]\coloneqq [g^p]$ for
  $g\in\lambda_{n,p}$ (with $g^p\in\lambda_{n+1,p}$) gives
  $\bigoplus \lambda_{n,p}/\lambda_{n+1,p}$ the structure of an
  $\finfield[\pi]$-module. For this we use the Hall-Petrescu
  identities~\cite{Hall:1934}*{Theorems~3.1, 3.2}: if $g,h$ belong to
  an arbitrary group $G$, then
  $(gh)^p \equiv g^p h^p [h,g]^n\alpha(g,h)$ with $n=\binom p2$ and
  $\alpha(\cdot,\cdot)$ a universal expression in $\gamma_3$. This implies
  $(gh)^p=g^p h^p\mod{\gamma_{n+2,p}}$ for $g,h\in\gamma_{n,p}$
  if either $n\ge 2$ or $n=1$ and $p\ge 3$. However, beware that if
  $n=1$ and $p=2$ then this does not hold in general, so the $p$-power
  operation is not linear. We see that $\mu_L$ maps this $p$-power
  operation to multiplication by $\pi$ on
  $L_\Gamma\otimes_\integers\finfield[\pi]$. It follows that $\mu_L$
  is an $\finfield[\pi]$-Lie algebra homomorphism. Its image contains
  $V$ which generates $L_\Gamma$, so $\mu_L$ is surjective. Finally,
  $L_\Gamma\otimes_\integers\finfield[\pi]$ is the free Lie algebra
  over $\finfield[\pi]$ modulo the relations $[v,w]=0$ for
  $\{v,w\}\in E$. Those relations are clearly satisfied in the
  $\finfield[\pi]$-Lie algebra
  $\bigoplus_{n\ge1}\lambda_{n,p}/\lambda_{n+1,p}$, so the map $\mu_L$
  is an isomorphism.
  
  It then follows that the second map is surjective and therefore an isomorphism, so the first is also bijective, from which we deduce $\beta_n=\lambda_{n,p}$.
\end{proof}

\begin{proof}[Proof of Theorem~\ref{thm:cs2}, characteristic $p$]
  Let $\Bbbk$ be an algebra over $\finfield$. By~\cite{Quillen:1968},
  the Lie algebra
  $\bigoplus_{n\ge1}(\gamma_{n,p}/\gamma_{n+1,p})\otimes_\integers\Bbbk$
  is isomorphic to the primitive subalgebra of
  $\bigoplus_{n\ge0}\varpi(A_\Gamma)^n/\varpi(A_\Gamma)^{n+1}\cong
  R_\Gamma$, namely to $L_{\Gamma,p}$.
\end{proof}

\begin{proof}[Proof of Theorem~\ref{thm:lie2}, last claim]
  This is precisely Lemma~\ref{lem:exponentp}.
\end{proof}

\section{Growth series}\label{ss:growth}
We derive now some relations between the Poincar\'e series of $S_\Gamma$, $R_\Gamma$, $L_\Gamma$ and $L_{\Gamma,p}$ from general considerations. We recall that, for a graded algebra $R=\bigoplus_{n\ge0}R_n$, its Poincar\'e series is $\Phi_R(t)=\sum_{n\ge0}\rank(R_n)t^n$.

\begin{proof}[Proof of Theorem~\ref{thm:growth}]
First, we use Koszul duality between $R_\Gamma$ and $S_\Gamma$ to deduce
$\Phi_{R_\Gamma}(t)\cdot\Phi_{S_\Gamma}(-t)=1$, compare \cite[Theorem 3.5.1]{LodayVallette}.
This relationship between the Poincar\'e series of $R_\Gamma$ and $S_\Gamma$ was already noted in~\cites{CartierFoata:1969,SheltonYuzvinsky:1997}.

We have
$\Phi_{S_\Gamma}(t)=\sum_{n\ge0}\rank
H^n(A_\Gamma,\Bbbk)t^n=\sum_{n\ge0}c_n(\Gamma)t^n$, with $c_n(\Gamma)$
the number of $n$-cliques in $\Gamma$, from our explicit basis of
$S_\Gamma$ given in Section~\ref{ss:coh}.

The relation between $\Phi_{R_\Gamma}$ and $\Phi_{L_\Gamma}$ is given by the Poincar\'e-Birkhoff-Witt theorem, namely the fact that $R_\Gamma$ and the symmetric algebra over $L_\Gamma$, respectively the degree-$p$ truncated symmetric algebra over $L_{\Gamma,p}$, are isomorphic as graded $\Bbbk$-modules. It is expressed by the relation
\[\sum_{n\ge0}a_n t^n=\prod_{n\ge1}\left(\frac1{1-t^n}\right)^{b_n}=\prod_{n\ge1}\left(\frac{1-t^{p n}}{1-t^n}\right)^{c_n}\]
if $\Phi_{R_\Gamma}(t)=\sum_{n\ge0}a_n t^n$, $\Phi_{L_\Gamma}(t)=\sum_{n\ge1}b_n t^n$, and $\Phi_{L_{\Gamma,p}}(t)=\sum_{n\ge1}c_n t^n$.

Finally, we consider the growth series of the group $A_\Gamma$. It is the function $\Phi_{A_\Gamma}(t)=\sum_{g\in A_\Gamma}t^{\|g\|}$, with $\|g\|$ the minimal number of terms of $V\cup V^{-1}$ required to write $g$ as a product. We cite~\cite{AthreyaPrasad:2014}:
\[\Phi_{A_\Gamma}(t) = \Phi_{R_\Gamma}\bigg(\frac{2t}{1+t}\bigg).\]
Indeed, as we saw in Proposition~\ref{prop:normal_form}, every element
$g\in A_\Gamma$ can be written in the form
$g=v_1^{e_1}\cdots v_n^{e_n}$ for some $e_i\in\integers\setminus\{0\}$
as a word of minimal length; and this expression is unique up to
permuting some terms according to rule~(M3). Let $I$ be the set of (M3)-equivalence
classes $(v_1,\dots,v_n)$ of minimal-length sequences. For an element
$[v_1,\dots,v_n]$ of $I$,
the collection of
all such terms $v_1^{e_1}\cdots v_n^{e_n}$ contributes $(t+t^2+t^3+\cdots)^n =
(t/(1-t))^n$ to the 
growth series of $R_\Gamma$ because each $e_i$ can be an arbitrary positive
natural number; and it contributes $(2t/(1-t))^n$ to the growth of
$A_\Gamma$, taking into account the signs of the $e_i$. Since we
obtain all elements of $A_\Gamma$ and all basis elements of $R_\Gamma$
that way, we have
\begin{equation*}
  \Phi_{A_\Gamma}(t) = \sum_I \left(\frac{2t}{1-t}\right)^n = \sum_I
  \left(\frac{2t/(1+t)}{1-2t/(1+t)}\right)^n = \Phi_{R_\Gamma}\left(\frac{2t}{1+t}\right),
\end{equation*}
using $2t/(1-t)=(2t/(1+t))/(1-2t/(1+t))$y. We have finished the proof of
Theorem~\ref{thm:growth}.
\end{proof}

\section{Malcev completions}\label{ss:malcev}
In this section we fix $\Bbbk=\rationals$. Recall
from~\cite{PapadimaSuciu:2004} that a \emph{Malcev Lie algebra} is a
Lie algebra $L$ over $\rationals$, given with a descending filtration
$(L_n)_{n\ge1}$ of ideals such that $L$ is complete with respect to
the associated topology, and satisfying $L_1=L$ and
$[L_m,L_n]\subseteq L_{m+n}$ and such that
$\bigoplus_{n\ge1}L_n/L_{n+1}$ is generated in degree $1$. Every
Malcev Lie algebra admits an associated \emph{exponential group}
$\exp(L)$, which is $L$ as a set, with product given by the
Baker-Campbell-Hausdorff formula $x\cdot y=x+y+[x,y]/2+\cdots$.

Lazard proved in~\cite{Lazard:1954} that every group homomorphism
$\rho\colon G\to\exp(L)$ induces a morphism of graded Lie algebras
$\bigoplus_{n\ge1}\gamma_n/\gamma_{n+1}\otimes\rationals\to\bigoplus_{n\ge1}L_n/L_{n+1}$.

A \emph{Malcev completion} of a group $G$ is a homomorphism
$\rho\colon G\to\exp(L)$ for a Malcev Lie algebra $L$, universal in
the sense that every representation $G/\gamma_n\to\exp(L')$ for a
(nilpotent) Malcev Lie algebra $L'$ factors uniquely through
$\exp(L/L_n)$; see~\cite{PapadimaSuciu:2004}*{Definition~2.3}.

Quillen gave a direct construction of the Malcev completion of a group
in~\cites{Quillen:1968,Quillen:1969}: let $\overline{\rationals
  G}=\projlim\rationals G/\varpi^n$ be the completion of the group ring; then
$\overline{\rationals G}$ is a complete Hopf algebra. Let $L$ be its Lie
subalgebra of primitive elements; it is a Malcev Lie algebra for the
filtration $L_n=L\cap\overline\varpi^n$. Let $\exp\colon L\to
\overline{\rationals G}$ be the usual power series map
$\exp(x)=1+x+x^2/2+\cdots$ which makes sense in $\overline{\rationals
  G}$. Then its image $\overline G\coloneqq\exp(L)$ is a subgroup of the group of
multiplicative units. It identifies with the Lie group associated to the
Malcev Lie algebra $L$, and it consists precisely of the grouplike elements in
$\overline{\rationals G}$, namely the $g\in1+\overline\varpi$ satisfying
$\Delta(g)=g\otimes g$. The representation $\rho\colon G\to\exp(L); g\mapsto
g$ is the Malcev completion of $G$.

The Magnus map $\mu\colon A_\Gamma\to\overline{R_\Gamma}^\times$
yields an isomorphism of associative algebras
$\overline{\rationals A_\Gamma}\cong\overline{R_\Gamma}$. Both
algebras are actually complete Hopf algebras, but the Magnus
isomorphism does not preserve the Hopf algebra structure: $v\in V\subset\overline{\rationals A_\Gamma}$ is group-like, meaning $\Delta(v)=v\otimes v$ while $v\in V\subset\overline{R_\Gamma}$ is primitive, meaning $\Delta(v)=v\otimes 1+1\otimes v$; so $\Delta(\mu(v))=\Delta(1+v)=1\otimes 1+v\otimes 1+1\otimes v$ while $(\mu\otimes\mu)(\Delta(v))=(1+v)\otimes(1+v)$.

The Magnus map $\mu$ is, in fact, the truncation to order $1$ of a Hopf algebra isomorphism $\mu_{\exp}\colon\overline{\rationals A_\Gamma}\to\overline{R_\Gamma}$, given on $v\in V$ by the classical exponential series
\[\mu_{\exp}(v)=\sum_{n\ge0}\frac{v^n}{n!}=1+v+\mathcal O(v^2).
\]

\begin{proof}[Proof of Theorem~\ref{thm:malcev}]
The proof that $\mu_{\exp}$ is an isomorphism of filtered associative
algebras is exactly the same as that of Theorem~\ref{thm:cs2}, and will
not be repeated. On the other hand, the fact that $\mu_{\exp}$ is a
coalgebra map follows formally from the fact that the power series
$\exp$ maps primitive elements to group-like elements:
\begin{align*}
  \Delta(\mu_{\exp}(v)) &= \Delta\Big(\sum_{n\ge0}v^n/n!\Big) = \sum_{n\ge0}\Delta(v)^n/n!\\
                 &=\sum_{n\ge0}\frac{(v\otimes1+1\otimes v)^n}{n!}=\sum_{\ell,m\ge0}\frac{(v\otimes1)^\ell(1\otimes v)^m}{\ell!m!}\\
                 &=(\exp v\otimes1)(1\otimes\exp v)=(\mu_{\exp}\otimes\mu_{\exp})(\Delta(v)).
\end{align*}
We have proven the first claim.

It now suffices to use this isomorphism $\mu_{\exp}$ to make even more
concrete the construction of Quillen sketched above: in
$\overline{\rationals A_\Gamma}$ the space of primitive elements is
slightly mysterious, for example, it contains
\begin{equation*}
\log(g)=\log(1-(1-g))=-\sum_{n\ge1}(1-g)^n/n\quad\forall g\in A_\Gamma.
\end{equation*}
In contrast to this, its exponential is the Malcev completion
naturally containing $A_\Gamma$. In $\overline{R_\Gamma}$ the
opposite holds: the space of primitive elements is the Lie subalgebra
$L_\Gamma$ while its exponential cannot be better defined than as the
exponential of $L_\Gamma$.

In all cases, the Hopf algebra isomorphism $\mu_{\exp}$ directly
yields the remaining claims of Theorem~\ref{thm:malcev}.
\end{proof}

We now turn to formality in the sense of Sullivan in rational homotopy
theory. A finite CW-complex
$X$ is called \emph{formal} if its algebraic minimal model is
quasi-isomorphic to $(H^*(X;\rationals),0)$. This implies that the
rational homotopy type of $X$ is determined in a precise way by its
rational cohomology ring. For details on rational homotopy theory
compare~\cite{Sullivan:1977} or the more recent~\cite{Felix:2001}.

We finally prove that the space $X_\Gamma$ constructed in
Section~\ref{ss:coh} is formal.  Recall that we defined $X_\Gamma$ as
a (cubical) subspace of the smooth manifold $(\reals/\integers)^V$. It
makes perfect sense to restrict smooth differential forms on
$(\reals/\integers)^V$ to $X_\Gamma$. We define
$A^*(X_\Gamma)$ to be the algebra of all such restrictions; it is a commutative differential graded algebra (cdga). It is an easy
exercise that this cdga is quasi-isomorphic to the standard cdga over $\reals$ of
rational homotopy theory associated to $X_\Gamma$. There are basic
one-forms $dx_v$ on $(\reals/\integers)^V$ coming from the obvious
coordinate functions, for $v\in V$. Their images in $A^*(X_\Gamma)$
generate a sub-cdga with trivial differential, whose homology is
$H^*(X_\Gamma;\reals)$ by Theorem~\ref{thm:coh}. The inclusion of this
sub-cdga in $A^*(X_\Gamma)$ is a quasi-isomorphism, showing that
$X_\Gamma$ is formal.

We now explicitly exhibit a minimal model for $X_\Gamma$. Recall
from Section~\ref{ss:lie} the Chevalley complex
$(\bigwedge^*(L_\Gamma^\circ),d)$ of $L_\Gamma$. Note that $L_\Gamma$ is
graded, and $L_\Gamma^\circ$ may be 
identified with the graded dual of $L_\Gamma$. Consequently, there is a natural
map $L_\Gamma^\circ\to \rationals V$ given by restricting to the
degree-$1$ part. This map induces a map of graded algebras
$\bigwedge^*(L_\Gamma^\circ)\to \bigwedge^*(\rationals V)/\langle v\wedge
w\text{ for }\{v,w\}\notin E\rangle=S_\Gamma$. Even better, this is a map of
cdgas from the Chevalley complex to $S_\Gamma$, the latter equipped with
zero differential, and indeed is a quasi-isomorphism. These are manifestations
of the Koszul duality of $S_\Gamma$ and $R_\Gamma=U(L_\Gamma)$. As $X_\Gamma$ is formal
and $S_\Gamma=H^*(X_\Gamma;\rationals)$ we conclude that
$\bigwedge^*(L_\Gamma^\circ,d)$ is a minimal model of $X_\Gamma$.

Here is yet an alternative proof: a group is called \emph{$1$-formal}
if its Malcev Lie algebra is quadratic. It therefore follows from
Theorem~\ref{thm:malcev} that $A_\Gamma$ is $1$-formal. The cohomology
ring $H^*(X_\Gamma;\rationals)\cong S_\Gamma$ is Koszul by
Theorem~\ref{thm:coh}, so $X_\Gamma$ is formal
by~\cite{PapadimaSuciu:2006}*{Proposition~2.1}.

\section{Outlook}

\subsection{Subgroup growth}
Baik, Petri, and Raimbault determined the subgroup growth of $A_\Gamma$ in terms of the graph $\Gamma$. Define $s_n(A_\Gamma)$ as the number of subgroups of $A_\Gamma$ of index precisely $n$. Then~\cite{BaikPetriRaimbault:2018}*{Theorem A} establishes 
\begin{equation*}
    \lim_{n\to\infty} \frac{\log(s_n(A_\Gamma))}{n\log(n)} =\alpha(\Gamma)-1,
\end{equation*}
i.e.~$s_n(A_\Gamma)$ grows like $(n!)^{\alpha(\Gamma)-1}$. Here, $\alpha(\Gamma)$ is the \emph{independence number} of $\Gamma$, the largest number of vertices such that the full subgraph of $\Gamma$ spanned by them is discrete.
    
We do not discuss the rather complicated proof here. We leave it an open question to find a corresponding result for the growth of the number of finite index Lie subalgebras of $L_\Gamma$. Indeed, we expect that these two series are closely related and that the latter is slightly easier to control than $(s_n(A_\Gamma))_{n\in\naturals}$.

We have identified $\gamma_{n,p}(A_\Gamma)$ with $\delta_{n,\finfield_p A_\Gamma}$ in Theorem~\ref{thm:cs2}. For a group $G$, we could define $\gamma_{n,p^e}$ as the subgroup generated by $\gamma_n$ and all $\gamma_i^{p^j}$ with $i p^j\ge n p^{e-1}$. When $G$ is free, it was shown by Lazard that $\gamma_{n,p^e}(G)$ coincides with the dimension subgroup $\delta_{n,\integers/p^e\integers[G]}$ while this does not hold for general $G$, see~\cite{Moran}.

We leave it as an exercise to extend Lazard's result to $A_\Gamma$.

\subsection{Homology gradients}
Given a group $G$ and a nested sequence of finite index normal subgroups $G_n\triangleleft G$
with $\bigcup_{n} \Gamma_n=\{1\}$, one defines for a field $\Bbbk$ the
\emph{$\Bbbk$-homology gradients}
\begin{equation*}
  b_i^{(2)}(G;\Bbbk) := \limsup_n \frac{b_i(G_n;\Bbbk)}{[G:G_n]}\text{ for }i\in\naturals.
\end{equation*}
For general groups $G$, it is unclear whether this quantity depends on the
particular chain $\{G_n\}$. Until recently, it was also unclear in which manner this
quantity depends on the coefficients $\Bbbk$. 
Avradmidi, Okun, and Schreve in \cite{AOS} use the classifying space $X_\Gamma$ and induced cell structures
for coverings to explicitly compute these homology gradients. Let $F_\Gamma$ be
the flag complex generated by $\Gamma$, i.e.~the largest simplicial complex
with vertex set $V$ and edge set $E$. Then
\begin{equation*}
  b_i^{(2)}(A_\Gamma;\Bbbk) = \overline{b_{i-1}}(F_\Gamma;\Bbbk)
\end{equation*}
where $\overline{b_*}(F_\Gamma;\Bbbk)$ denotes the dimension of the
reduced homology of $F_\Gamma$. In particular, for RAAGs the homology
gradient is independent of the chain of normal subgroups, even though
in many examples it does depend on the field of coefficients $\Bbbk$.

\begin{bibdiv}
  \begin{biblist}
\bib{Anick:1987}{article}{
   author={Anick, David J.},
   title={Inert sets and the Lie algebra associated to a group},
   journal={J. Algebra},
   volume={111},
   date={1987},
   number={1},
   pages={154--165},
   issn={0021-8693},
   review={\MR{913201}},
   doi={10.1016/0021-8693(87)90246-8},
}
\bib{AthreyaPrasad:2014}{article}{
  title={Growth in Right-Angled Groups and Monoids},
  author={Athreya, Jayadev S.},
  author={Prasad, Amritanshu},
  date={2014},
  note={preprint},
  eprint={arXiv:1409.4142},
}
\bib{AOS}{unpublished}{
note={arXiv:2003.01020},
title={Mod $p$ and torsion homology growth in nonpositive curvature},
author={ Avramidi, Grigori},
author={Okun, Boris},
author={Schreve, Kevin},
date={2020},
}
\bib{BaikPetriRaimbault:2018}{unpublished}{
 title={Subgroup growth of right-angled Artin and Coxeter groups},
 author={Baik, Hyungryul},
 author={Petri, Bram},
 author={Raimbault, Jean},
 date={2018},
 note={preprint},
 eprint={arXiv:1805.03893},
 }

\bib{BridsonHaefliger:1999}{book}{
  author={Bridson, Martin~R.},
  author={Haefliger, Andr{\'e}},
  title={Metric spaces of non-positive curvature},
  publisher={Springer-Verlag},
  address={Berlin},
  date={1999},
  ISBN={3-540-64324-9},
  review={\MR{2000k:53038}},
}
\bib{CartierFoata:1969}{book}{
   author={Cartier, Pierre},
   author={Foata, Dominique},
   title={Probl\`emes combinatoires de commutation et r\'{e}arrangements},
   language={French},
   series={Lecture Notes in Mathematics, No. 85},
   publisher={Springer-Verlag, Berlin-New York},
   date={1969},
   pages={iv+88},
   review={\MR{0239978}},
}
\bib{Droms:1983}{book}{
   author={Droms, Carl Gordon Arthur},
   title={GRAPH GROUPS (ALGEBRA, KIM, ROUSH, MAGNUS)},
   note={Thesis (Ph.D.)--Syracuse University},
   publisher={ProQuest LLC, Ann Arbor, MI},
   date={1983},
   pages={125},
   review={\MR{2633165}},
}
\bib{DuchampKrob1}{article}{
  author={Duchamp, G\'erard H. E.},
  author={Krob, Daniel},
  title={The lower central series of the free partially commutative group},
  journal={Semigroup Forum},
  volume={45},
  date={1992},
  number={3},
  pages={385--394},
  issn={0037-1912},
  review={\MR{1179860}},
  doi={10.1007/BF03025778},
}
\bib{DuchampKrob2}{article}{
  author={Duchamp, G\'erard H. E.},
  author={Krob, Daniel},
  title={The free partially commutative Lie algebra: bases and ranks},
  journal={Adv. Math.},
  volume={95},
  date={1992},
  number={1},
  pages={92--126},
  issn={0001-8708},
  review={\MR{1176154}},
  doi={10.1016/0001-8708(92)90045-M},
}
\bib{Felix:2001}{book}{
   author={F\'{e}lix, Yves},
   author={Halperin, Stephen},
   author={Thomas, Jean-Claude},
   title={Rational homotopy theory},
   series={Graduate Texts in Mathematics},
   volume={205},
   publisher={Springer-Verlag, New York},
   date={2001},
   pages={xxxiv+535},
   isbn={0-387-95068-0},
   review={\MR{1802847}},
   doi={10.1007/978-1-4613-0105-9},
}
\bib{Froberg:1975}{article}{
   author={Fr\"{o}berg, Ralph},
   title={Determination of a class of Poincar\'{e} series},
   journal={Math. Scand.},
   volume={37},
   date={1975},
   number={1},
   pages={29--39},
   issn={0025-5521},
   review={\MR{0404254}},
   doi={10.7146/math.scand.a-11585},
 }
 \bib{Gromov:1987}{article}{
  author={Gromov, Mikhail L.},
  title={Hyperbolic groups},
  conference={
    title={Essays in group theory},
  },
  book={
    series={Math. Sci. Res. Inst. Publ.},
    volume={8},
    publisher={Springer, New York},
  },
  date={1987},
  pages={75--263},
  review={\MR{919829}},
  doi={10.1007/978-1-4613-9586-7\_3},
}
\bib{Gruenberg:1957}{article}{
   author={Gruenberg, Karl W.},
   title={Residual properties of infinite soluble groups},
   journal={Proc. London Math. Soc. (3)},
   volume={7},
   date={1957},
   pages={29--62},
   issn={0024-6115},
   review={\MR{0087652}},
   doi={10.1112/plms/s3-7.1.29},
}
\bib{Hall:1934}{article}{
   author={Hall, Philip},
   title={A Contribution to the Theory of Groups of Prime-Power Order},
   journal={Proc. London Math. Soc. (2)},
   volume={36},
   date={1934},
   pages={29--95},
   issn={0024-6115},
   review={\MR{1575964}},
   doi={10.1112/plms/s2-36.1.29},
}
\bib{Hall}{book}{
   author={Hall, Philip},
   title={The Edmonton notes on nilpotent groups},
   series={Queen Mary College Mathematics Notes},
   publisher={Mathematics Department, Queen Mary College, London},
   date={1969},
   pages={iii+76},
   review={\MR{0283083}},
}
\bib{HalperinLemaire:1987}{article}{
   author={Halperin, Stephen},
   author={Lemaire, Jean-Michel},
   title={Suites inertes dans les alg\`ebres de Lie gradu\'{e}es (``Autopsie d'un
   meurtre. II'')},
   language={French},
   journal={Math. Scand.},
   volume={61},
   date={1987},
   number={1},
   pages={39--67},
   issn={0025-5521},
   review={\MR{929396}},
   doi={10.7146/math.scand.a-12190},
 }
 \bib{Jacobson:1941}{article}{
  author={Jacobson, Nathan},
  title={Restricted Lie algebras of characteristic $p$},
  journal={Trans. Amer. Math. Soc.},
  volume={50},
  date={1941},
  pages={15--25},
  issn={0002-9947},
  review={\MR{0005118}},
  doi={10.2307/1989908},
}
\bib{Jennings:1941}{article}{
  author={Jennings, Stephen~A.},
  title={The structure of the group ring of a $p$-group over a modular
    field},
  date={1941},
  journal={Trans. Amer. Math. Soc.},
  volume={50},
  pages={175\ndash 185},
}
\bib{Jennings:1955}{article}{
   author={Jennings, Stephen~A.},
   title={The group ring of a class of infinite nilpotent groups},
   journal={Canad. J. Math.},
   volume={7},
   date={1955},
   pages={169--187},
   issn={0008-414X},
   review={\MR{0068540}},
   doi={10.4153/CJM-1955-022-5},
}

\bib{KapovichMillson:1998}{article}{
   author={Kapovich, Michael},
   author={Millson, John J.},
   title={On representation varieties of Artin groups, projective
   arrangements and the fundamental groups of smooth complex algebraic
   varieties},
   journal={Inst. Hautes \'{E}tudes Sci. Publ. Math.},
   number={88},
   date={1998},
   pages={5--95 (1999)},
   issn={0073-8301},
   review={\MR{1733326}},
}

\bib{Labute:1985}{article}{
   author={Labute, John P.},
   title={The determination of the Lie algebra associated to the lower
   central series of a group},
   journal={Trans. Amer. Math. Soc.},
   volume={288},
   date={1985},
   number={1},
   pages={51--57},
   issn={0002-9947},
   review={\MR{773046}},
   doi={10.2307/2000425},
}
\bib{Lazard:1954}{article}{
  author={Lazard, Michel},
  title={Sur les groupes nilpotents et les anneaux de Lie},
  date={1954},
  journal={Ann. {\'Ecole} Norm. Sup. (3)},
  volume={71},
  pages={101\ndash 190},
}
\bib{Magnus:1935}{article}{
   author={Magnus, Wilhelm},
   title={Beziehungen zwischen Gruppen und Idealen in einem speziellen Ring},
   language={German},
   journal={Math. Ann.},
   volume={111},
   date={1935},
   number={1},
   pages={259--280},
   issn={0025-5831},
   review={\MR{1512992}},
   doi={10.1007/BF01472217},
}
\bib{LodayVallette}{book}{
   author={Loday, Jean-Louis},
   author={Vallette, Bruno},
   title={Algebraic operads},
   series={Grundlehren der Mathematischen Wissenschaften [Fundamental
   Principles of Mathematical Sciences]},
   volume={346},
   publisher={Springer, Heidelberg},
   date={2012},
   pages={xxiv+634},
   isbn={978-3-642-30361-6},
   review={\MR{2954392}},
   doi={10.1007/978-3-642-30362-3},
 }
 \bib{MagnusKarrassSolitar}{book}{
  author={Magnus, Wilhelm},
  author={Karrass, Abraham},
  author={Solitar, Donald},
  title={Combinatorial group theory},
  edition={Second revised edition},
  note={Presentations of groups in terms of generators and relations},
  publisher={Dover Publications, Inc., New York},
  date={1976},
  pages={xii+444},
  review={\MR{0422434}},
}
\bib{Malcev:1949}{article}{
  author={Mal\cprime cev, Anatoli\u\i\ I.},
  title={Nilpotent torsion-free groups},
  language={Russian},
  journal={Izvestiya Akad. Nauk. SSSR. Ser. Mat.},
  volume={13},
  date={1949},
  pages={201--212},
  issn={0373-2436},
  review={\MR{0028843}},
}

\bib{MilnorMoore:1965}{article}{
  author={Milnor, John~W.},
  author={Moore, John~C.},
  title={On the structure of Hopf algebras},
  journal={Ann. of Math. (2)},
  volume={81},
  date={1965},
  pages={211\ndash 264},
  issn={0003-486X},
  review={\MR{0174052 (30 \#4259)}},
}
\bib{Moran}{article}{
   author={Moran, Siegfried},
   title={Dimension subgroups modulo $n$},
   journal={Proc. Cambridge Philos. Soc.},
   volume={68},
   date={1970},
   pages={579--582},
   review={\MR{0267003}},
   doi={10.1017/s0305004100076556},
}
\bib{PapadimaSuciu:2004}{article}{
  author={Papadima, Stefan},
  author={Suciu, Alexander I.},
  title={Chen Lie algebras},
  journal={Int. Math. Res. Not.},
  date={2004},
  number={21},
  pages={1057--1086},
  issn={1073-7928},
  review={\MR{2037049}},
  doi={10.1155/S1073792804132017},
}
\bib{PapadimaSuciu:2006}{article}{
  author={Papadima, Stefan},
  author={Suciu, Alexander I.},
  title={Algebraic invariants for right-angled Artin groups},
  journal={Math. Ann.},
  volume={334},
  date={2006},
  number={3},
  pages={533--555},
  issn={0025-5831},
  review={\MR{2207874}},
  doi={10.1007/s00208-005-0704-9},
}
\bib{Priddy:1970}{article}{
  author={Priddy, Stewart B.},
  title={Koszul resolutions},
  journal={Trans. Amer. Math. Soc.},
  volume={152},
  date={1970},
  pages={39--60},
  issn={0002-9947},
  review={\MR{0265437}},
  doi={10.2307/1995637},
}
\bib{Quillen:1968}{article}{
  author={Quillen, Daniel G.},
  title={On the associated graded ring of a group ring},
  journal={J. Algebra},
  volume={10},
  date={1968},
  pages={411--418},
  issn={0021-8693},
  review={\MR{0231919}},
  doi={10.1016/0021-8693(68)90069-0},
}
\bib{Quillen:1969}{article}{
  author={Quillen, Daniel G.},
  title={Rational homotopy theory},
  journal={Ann. of Math. (2)},
  volume={90},
  date={1969},
  pages={205--295},
  issn={0003-486X},
  review={\MR{0258031}},
  doi={10.2307/1970725},
}
\bib{Runde}{thesis}{
 author={Runde, Henrika Maria},
 title={The lower $p$-central series of RAAGs},
 school={Universit\"at G\"ottingen},
 type={Bachelor thesis},
 year={2018},
}
\bib{Schmitt}{article}{
  author={Schmitt, William},
  title={Hopf algebras and identities in free partially commutative monoids},
  journal={Theoretical Computer Science},
Volume={73}, 
  date={1990}, 
  Pages={335--340},
  doi={https://doi.org/10.1016/0304-3975(90)90184-J},
  }
\bib{Serre:1964}{book}{
   author={Serre, Jean-Pierre},
   title={Lie algebras and Lie groups},
   series={Lecture Notes in Mathematics},
   volume={1500},
   note={1964 lectures given at Harvard University;
   Corrected fifth printing of the second (1992) edition},
   publisher={Springer-Verlag, Berlin},
   date={2006},
   pages={viii+168},
   isbn={978-3-540-55008-2},
   isbn={3-540-55008-9},
   review={\MR{2179691}},
}
\bib{Serre:1965}{book}{
   author={Serre, Jean-Pierre},
   title={Alg\`ebre locale. Multiplicit\'{e}s},
   language={French},
   series={Cours au Coll\`ege de France, 1957--1958, r\'{e}dig\'{e} par Pierre
   Gabriel. Seconde \'{e}dition, 1965. Lecture Notes in Mathematics},
   volume={11},
   publisher={Springer-Verlag, Berlin-New York},
   date={1965},
   pages={vii+188 pp. (not consecutively paged)},
   review={\MR{0201468}},
}
\bib{SheltonYuzvinsky:1997}{article}{
  author={Shelton, Brad},
  author={Yuzvinsky, Sergey},
  title={Koszul algebras from graphs and hyperplane arrangements},
  journal={J. London Math. Soc. (2)},
  volume={56},
  date={1997},
  number={3},
  pages={477--490},
  issn={0024-6107},
  review={\MR{1610447}},
  doi={10.1112/S0024610797005553},
}
	
\bib{Sullivan:1977}{article}{
   author={Sullivan, Dennis},
   title={Infinitesimal computations in topology},
   journal={Inst. Hautes \'{E}tudes Sci. Publ. Math.},
   number={47},
   date={1977},
   pages={269--331 (1978)},
   issn={0073-8301},
   review={\MR{0646078}},
}
\bib{Sweedler:1969}{book}{
   author={Sweedler, Moss E.},
   title={Hopf algebras},
   series={Mathematics Lecture Note Series},
   publisher={W. A. Benjamin, Inc., New York},
   date={1969},
   pages={vii+336},
   review={\MR{0252485}},
 }
 
\bib{Wade}{article}{
  author={Wade, Richard D.},
  title={The lower central series of a right-angled Artin group},
  journal={Enseign. Math.},
  volume={61},
  date={2015},
  number={3-4},
  pages={343--371},
  issn={0013-8584},
  review={\MR{3539842}},
  doi={10.4171/LEM/61-3/4-4},
}
\bib{Zassenhaus}{article}{
  author={Zassenhaus, Hans},
  title={Ein Verfahren, jeder endlichen $p$-Gruppe einen Lie-Ring mit der
    Charakteristik $p$ zuzuordnen},
  language={German},
  journal={Abh. Math. Sem. Univ. Hamburg},
  volume={13},
  date={1939},
  number={1},
  pages={200--207},
  issn={0025-5858},
  review={\MR{3069703}},
  doi={10.1007/BF02940757},
}
  \end{biblist}
\end{bibdiv}

\section*{Acknowledgments}
We are deeply grateful to Jacques Darn\'e, Pierre de la Harpe and the
anonymous referee for their well-thought comments on preliminary
versions of our text.\\

\end{document}